\newcommand{\id}{\operatorname{id}} 
\newtheorem{thm}[subsection]{Theorem}
\newtheorem{lem}[subsection]{Lemma}
\newtheorem{pro}[subsection]{Proposition}
\newtheorem{defn}[subsection]{Definition}
\newtheorem{rem}[subsection]{Remark}
\begin{document}

\title{Maximal curves of genus 5 over finite fields } 

\author[]{Leolin~Nkuete}
\address{Department of Mathematics, University of Luxembourg, Maison du nombre, 6~avenue de la Fonte, L-4364 Esch-sur-Alzette,
Luxembourg} \email{leolin.nkuete@uni.lu}

\author[]{Antigona~Pajaziti}
\address{Department of Mathematics, University of Luxembourg, Maison du nombre, 6~avenue de la Fonte, L-4364 Esch-sur-Alzette,
Luxembourg}
\address{Mathematical Institute, Leiden University, Niels Bohrweg 1, 2333 CA Leiden, The Netherlands}
\email{antigona.pajaziti@uni.lu\\
a.pajaziti@math.leidenuniv.nl}

\author[]{Hamide~Suluyer}
\address{Department of Mathematics, Atılım University, 06830 Gölbaşı, Ankara, Turkey}
\email{hamide.kuru@atilim.edu.tr}

\author[]{Rabia~G\"ul\c{s}ah~Uysal \\  with an Appendix by René Schoof}
\address{Department of Mathematics, Middle East Technical University, Odtü Matematik Bölümü Z-35 Çankaya, Ankara, Turkiye }
\email{guysal@metu.edu.tr}

\date{}

\begin{abstract}
A maximal curve over a finite field $\mathbb F_q$  is a curve whose number of points reaches the upper Hasse--Weil--Serre bound. We define the discriminant of $\mathbb F_q$ as $d(\mathbb F_q):= \lfloor2\sqrt{q}\rfloor^2-4q$, which arises as the discriminant of the characteristic polynomial of the Frobenius for a maximal elliptic curve defined over $\mathbb F_q$. In this article, we investigate the existence of a maximal curve of genus $5$ defined over a finite field $\mathbb F_q$ of discriminant $-19$. Using the knowledge of the automorphism group of such a curve, we prove that such  a curve does not exist when $q\equiv 2,3,4 \mod 5$. In the case $q\equiv 1\mod 5$ we give models of the potential maximal curve. Finally, for the case $q\equiv 0\bmod 5$, we prove that such a curve might exist only for $q=5^7$. 
\end{abstract}

\maketitle
\section{Introduction}
Throughout this paper, by a curve we mean a smooth, projective, absolutely irreducible algebraic variety of dimension 1. Let $\mathbb F_q$ be a finite field, where $q=p^r$ for some integer $r\geq1$ and some prime number $p$. 
Given a curve $C$ defined over $\mathbb F_q$, the following Hasse--Weil--Serre bound in \cite{Perret1991} gives an upper bound on the number of the $\mathbb{F}_q$-rational points on $C$ $$\# C(\mathbb{F}_q)\leq g\lfloor 2\sqrt{q}\rfloor+q+1,$$ where $g$ is the genus of the curve $C$. When the number of $\mathbb F_q$-rational points reaches the upper Hasse-Weil-Serre bound $q+1+ g\lfloor 2\sqrt{q}\rfloor$, we say that $C$ is a maximal curve. Maximal  curves play an important role in coding theory. It is well known that curves with small genus $g$ and a large number of rational points  generate ``good''  algebraic geometry codes, as presented by Goppa in \cite{Goppa1981b}. One might ask, given fixed $q$ and $g$: Can one find a maximal  curve of genus $g$ defined over $\mathbb F_q$?  This kind of question has been studied in the literature and 
in 2010, van der Geer, Howe, Lauter and Ritzenthaler created the website \href{https://manypoints.org/}{manypoints.org} \cite{manypoint}, where there are listed the best known upper and lower bounds for the maximal number of rational points for curves of genus $g$ defined over a finite field $\mathbb F_q$. However, knowing a bound on the maximal number of rational points does not guarantee the existence of a curve  whose number of rational points reaches the bound. 

We let $d(\mathbb F_q):= \lfloor2\sqrt{q}\rfloor^2-4q$ be the discriminant of $\mathbb F_q$. This quantity happens to be the discriminant of the characteristic polynomial of the Frobenius for a maximal elliptic curve defined over $\mathbb F_q$, as will be seen in Section \ref{section 2}. Previous works on maximal curves of genus $1,2,3$ and $4$ over a finite field of discriminant $-19$ can be found in \cite{AlekseenkoOptimal} and \cite{zaytsevoptimal}.
In this paper we will investigate the existence of a maximal curve of genus $5$ defined over $\mathbb F_q$ with $ d(\mathbb F_q)=-19$.  In particular, the latter condition implies $q\equiv -x^2+1\bmod 5$, where $x=\lfloor2\sqrt{q}\rfloor$. It follows that the cases $q\equiv 3,4\bmod 5$ do not occur, so we are restricted to the cases $q\equiv 0,1, 2\bmod 5$.  

By the theorem of Enrique - Babbage, see \cite[Proposition 2.8, Chapter VII]{Miranda}, a curve of genus $5$ can be either hyperelliptic, trigonal or a complete intersection of three quadrics in $\mathbb{P}^4$. A curve $C$ is called trigonal if there exists a morphism from $C$ to $\mathbb{P}^1$ of degree $3$. A trigonal curve of genus $5$ can be represented as a plane quintic with a node or cusp,  as explained in Section \ref{sec: q=2[5]}. In \cite[Lemma 4.11]{zaytsevoptimal},
Zaytsev showed that if the discriminant $d(\mathbb F_q)=-19$, a maximal curve  of genus $5$ over $\mathbb F_q$ cannot be hyperelliptic. Therefore, under the same condition,  we are left to investigate the cases when the curve is trigonal or a complete intersection of three quadrics in $\mathbb P^4$. In section \ref{section 2}, using Serre's equivalence of categories between the category of ordinary principally polarized abelian varieties over $\mathbb F_q$ isogenous to a power of an ordinary elliptic curve, and the category of unimodular irreducible hermitian torsion free modules and following Zaytsev \cite{zaytsevoptimal}, we describe the automorphism group of a maximal curve of genus $5$. In Section \ref{sec: q=2[5]}, we investigate the existence of a maximal curve of genus $5$ over $\mathbb F_q$ with  $d(\mathbb F_q)=-19$  and $q  \equiv 2 \bmod 5$. We recall in Section \ref{sec: proj} the theory of projective representations of a finite group $G$ defined over a field $K$, which we use later in Section \ref{sec: q=1[5]}. In Section \ref{sec: q=1[5]}, we first describe a potential equation of a maximal trigonal curve of genus $5$ defined over $\mathbb F_q$ with $q\equiv 1 \bmod{5 }$. Then, in the case where the curve arises as a complete intersection of three quadrics in $\mathbb P^4$, we apply the framework introduced in Section \ref{sec: proj} to the automorphism group of the curve to describe potential equations of a maximal curve of genus $5$ defined over $\mathbb F_q$ with $q\equiv 1 \bmod{5}$.

Finally in Section \ref{sec: q=0[5]}, for $q\equiv 0\mod 5$ we prove that, except for  $q=5^7$,   $q \equiv  0\bmod 5$ is not compatible with the condition $d(\mathbb F_q)=-19$. 

The main results of this paper are the following:

\begin{thm}
  There is no maximal curve of genus $5$ over $\mathbb F_q$ when $q \equiv 2,3,4 \bmod 5$ and $d(\mathbb F_q)=-19$.
\end{thm}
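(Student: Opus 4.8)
The plan is to treat the three residue classes separately, with essentially all of the work concentrated in the case $q\equiv 2\bmod 5$. For the classes $q\equiv 3,4\bmod 5$ there is in fact nothing to prove: setting $x=\lfloor 2\sqrt q\rfloor$, the hypothesis $d(\mathbb F_q)=-19$ reads $x^2-4q=-19$, whence $q\equiv 1-x^2\bmod 5$; as every square is congruent to $0,1$ or $4$ modulo $5$, the only possible values of $q\bmod 5$ are $1,0,2$. Thus no finite field with $d(\mathbb F_q)=-19$ satisfies $q\equiv 3$ or $q\equiv 4\bmod 5$, and these two cases hold vacuously.

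For $q\equiv 2\bmod 5$ I would argue by contradiction, assuming a maximal curve $C$ of genus $5$ over $\mathbb F_q$ exists. The decisive input is the description of $\mathrm{Aut}(C)$ obtained in Section~\ref{section 2}: the computation of the unitary automorphisms of the associated unimodular hermitian module over the maximal order of $\mathbb Q(\sqrt{-19})$ produces an automorphism $\sigma$ of order $5$ that is defined over $\mathbb F_q$ (it commutes with Frobenius), and that descends from the polarized Jacobian to $C$ by Torelli. Since $q\equiv 2\bmod 5$ forces $p\neq 5$, the automorphism $\sigma$ is tame.

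Next I would pin down its fixed locus through Riemann--Hurwitz for the degree-$5$ cyclic quotient $\pi\colon C\to C'=C/\langle\sigma\rangle$. Writing $g'$ for the genus of $C'$ and $f$ for the number of geometric fixed points of $\sigma$ (each totally ramified and contributing $e-1=4$), one gets $2\cdot 5-2=5(2g'-2)+4f$, that is $9=5g'+2f$. The only solution with $g',f\geq 0$ is $g'=1$ and $f=2$, so $\sigma$ has exactly two geometric fixed points and hence at most two $\mathbb F_q$-rational ones.

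Finally I would compare two evaluations of $\#C(\mathbb F_q)\bmod 5$. Because $\sigma$ is $\mathbb F_q$-rational, the group $\langle\sigma\rangle\cong\mathbb Z/5\mathbb Z$ acts on the finite set $C(\mathbb F_q)$ with orbits of size $1$ or $5$, so $\#C(\mathbb F_q)\equiv\#\mathrm{Fix}(\sigma)(\mathbb F_q)\bmod 5$ with $\#\mathrm{Fix}(\sigma)(\mathbb F_q)\leq 2$; thus $\#C(\mathbb F_q)\bmod 5\in\{0,1,2\}$. On the other hand maximality gives $\#C(\mathbb F_q)=q+1+5x\equiv q+1\equiv 3\bmod 5$, the desired contradiction. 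The one genuinely delicate step is the opening of this case --- verifying that the automorphism group really contains an order-$5$ element rational over $\mathbb F_q$ and tracking its descent to the curve; granted that, the Riemann--Hurwitz computation and the orbit congruence are routine.
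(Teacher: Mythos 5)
Your proposal is correct, but it takes a genuinely different route from the paper. You agree on the easy part: the classes $q\equiv 3,4\bmod 5$ are vacuous because $d(\mathbb F_q)=-19$ forces $q\equiv 1-x^2\bmod 5$ with $x=\lfloor 2\sqrt q\rfloor$, which is exactly how the paper disposes of them in the introduction. For $q\equiv 2\bmod 5$, however, the paper runs the Enriques--Babbage trichotomy: hyperelliptic is excluded by Zaytsev; in the trigonal case, Lemma \ref{automorphismtrigonal} realizes $\operatorname{Aut}(C)\cong D_5$ inside $\operatorname{PGL}_3(\mathbb F_q)$, whose order is prime to $5$ when $q\not\equiv 0,\pm1\bmod 5$; in the complete-intersection case, the order-$5$ automorphism is put into cyclic-permutation form on coordinates, the induced action on the net of quadrics is shown to be trivial, forcing $Q_1,Q_2,Q_3$ into the span of the symmetric quadrics $P_1,P_2,P_3$, which cut out a variety singular at $[1:\zeta:\zeta^2:\zeta^3:\zeta^4]$ --- contradicting smoothness. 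You bypass the trichotomy entirely: granting (as the paper's Section \ref{section 2}, i.e.\ Theorem \ref{zaytsev} together with the remark that all automorphisms are $\mathbb F_q$-rational, indeed supplies) an order-$5$ automorphism $\sigma$ defined over $\mathbb F_q$, your Riemann--Hurwitz computation $9=5g'+2f$ correctly pins down $g'=1$, $f=2$ (tameness holds since $q\equiv 2\bmod 5$ forces $p\neq 5$, and prime order makes every ramified point totally ramified), so the orbit count gives $\#C(\mathbb F_q)\equiv \#\operatorname{Fix}(\sigma)(\mathbb F_q)\bmod 5$ with at most $2$ rational fixed points, against $\#C(\mathbb F_q)=q+1+5\lfloor 2\sqrt q\rfloor\equiv 3\bmod 5$. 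This is shorter, avoids both the plane-model lemma and the explicit quadric computation, and is arguably more conceptual; two caveats are worth recording. First, your congruence genuinely uses $q\equiv 2$ (or $3$): for $q\equiv 4$ one would get $q+1\equiv 0\in\{0,1,2\}$ and no contradiction, so the vacuousness of that class is not just a convenience but necessary for your argument. Second, what the paper's longer route buys is reusable structure: the trigonal and quadric normal forms developed in its proof are exactly the machinery deployed in Section \ref{Section4} for $q\equiv 1\bmod 5$, where your counting argument is silent (there $q+1\equiv 2\bmod 5$ is compatible with two rational fixed points).
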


\begin{thm}
    The only value of $q$ satisfying $q\equiv 0 \bmod 5$  and $d(\mathbb{F}_q)=-19$ is $q=5^7$. 
\end{thm}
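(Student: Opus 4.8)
The plan is to translate the discriminant condition into a Ramanujan--Nagell type equation and solve it by factoring in the imaginary quadratic field $\mathbb{Q}(\sqrt{-19})$. Since $q\equiv 0\bmod 5$ forces $p=5$, I would write $q=5^r$ with $r\ge 1$ and set $x=\lfloor 2\sqrt{5^r}\rfloor$. The condition $d(\mathbb{F}_q)=-19$ then reads $x^2-4\cdot 5^r=-19$, i.e.
\begin{equation}\label{eq:RN}
x^2+19=4\cdot 5^r .
\end{equation}
Conversely, I would first record that whenever \eqref{eq:RN} holds for a positive integer $x$ one automatically has $x=\lfloor 2\sqrt{5^r}\rfloor$ as soon as $x\ge 10$: indeed $x<2\sqrt{5^r}$ is clear, while $(x+1)^2>4\cdot 5^r$ is equivalent to $x>9$. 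Thus the theorem is equivalent to determining all solutions of \eqref{eq:RN} subject to $x\ge 10$.

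Next I would work in the ring of integers $\mathcal{O}=\mathbb{Z}\!\left[\tfrac{1+\sqrt{-19}}{2}\right]$ of $K=\mathbb{Q}(\sqrt{-19})$, which is a principal ideal domain with unit group $\{\pm 1\}$ (as $-19<-4$). Since $-19\equiv 1$ is a square modulo $5$, the prime $5$ splits as $5=\omega\bar\omega$ with $\omega=\tfrac{1+\sqrt{-19}}{2}$ of norm $5$. As $x$ is necessarily odd, $\alpha:=\tfrac{x+\sqrt{-19}}{2}=\tfrac{x-1}{2}+\omega$ lies in $\mathcal{O}$ and has norm $\tfrac{x^2+19}{4}=5^r$. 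Because $\omega$ and $\bar\omega$ are coprime and $5\nmid\alpha$ (comparing $\omega$-coordinates shows $\alpha=5\gamma$ is impossible, since the $\omega$-coordinate of $\alpha$ is $1$), unique factorization forces $\alpha=\pm\omega^r$ or $\alpha=\pm\bar\omega^r$. Writing $\omega^r=u_r+v_r\omega$ with $u_r,v_r\in\mathbb{Z}$, the relation $\omega^2=\omega-5$ yields $u_r=-5v_{r-1}$ and the Lucas recurrence $v_{r+1}=v_r-5v_{r-1}$ with $v_0=0,\ v_1=1$. Comparing the coefficients of $\sqrt{-19}$ in $\alpha=\pm\omega^r$ then reduces \eqref{eq:RN} to the single condition $v_r=\pm 1$.

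The heart of the argument, and the step I expect to be the main obstacle, is to show that the Lucas sequence $v_r$ with parameters $(P,Q)=(1,5)$ attains the value $\pm 1$ only for $r\in\{1,2,7\}$. Since $|v_r|=1$ means $v_r$ has no prime divisor at all, in particular no \emph{primitive} one, the theorem of Bilu--Hanrot--Voutier on primitive divisors of Lucas sequences applies (the pair is non-degenerate, as $\omega/\bar\omega$ is not a root of unity because $|\omega|^2=5\ne 1$) and bounds the index by $r\le 30$; a direct computation of $v_r$ for $r\le 30$ then leaves exactly $v_1=v_2=v_7=1$. A more self-contained alternative would combine the closed form $|v_r|=\tfrac{2\cdot 5^{r/2}}{\sqrt{19}}\,\lvert\sin(r\theta)\rvert$, where $\theta=\arg\omega$, with a Baker-type lower bound on the relevant linear form in logarithms to force $r$ small, but the primitive-divisor route is the cleanest.

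Finally I would impose the constraint $x\ge 10$. The index $r=1$ gives $\alpha=\omega$ and $x=1$, while $r=2$ gives $\alpha=\omega^2=\tfrac{-9+\sqrt{-19}}{2}$ and $x=9$; for these, $\lfloor 2\sqrt{5^r}\rfloor$ equals $4$ and $10$, so $d(\mathbb{F}_q)\ne -19$ and they are discarded. For $r=7$ one has $u_7=-280$, $v_7=1$, hence $x=2u_7+v_7=-559$, i.e.\ $\lvert x\rvert=559\ge 10$, and indeed $d(\mathbb{F}_{5^7})=559^2-4\cdot 5^7=-19$. Therefore $q=5^7$ is the only value satisfying $q\equiv 0\bmod 5$ and $d(\mathbb{F}_q)=-19$.
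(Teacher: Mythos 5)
Your proposal is correct and follows the same overall skeleton as the paper: both reduce the statement to the Ramanujan--Nagell type equation $x^2+19=4\cdot 5^r$, factor in the class-number-one ring $\mathbb{Z}\bigl[\tfrac{1+\sqrt{-19}}{2}\bigr]$ with unit group $\{\pm 1\}$, and reduce to showing that the Lucas sequence $u_0=0$, $u_1=1$, $u_n=u_{n-1}-5u_{n-2}$ takes the value $\pm 1$ only at $n=1,2,7$ (your $v_r$ is exactly the paper's $u_n$). Where you genuinely diverge is in the proof of this key lemma: the paper gives a self-contained $7$-adic argument, splitting $n=r+6s$ into residue classes modulo $6$, expanding $(1+a)^s$ via the power-series lemma of Cassels, and applying Strassmann's theorem to bound the number of zeros in each class; you instead invoke the Bilu--Hanrot--Voutier primitive divisor theorem to force $r\le 30$ and finish by direct computation. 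Your route is shorter and conceptually transparent ($|v_r|=1$ means no prime divisor at all, hence no primitive one), but it outsources the difficulty to a deep black box whose proof itself rests on linear forms in logarithms and large-scale computation; the paper's Strassmann argument is elementary, effective, and self-contained, at the cost of some bookkeeping with $7$-adic congruences. Your added observation that any solution with $x\ge 10$ automatically satisfies $x=\lfloor 2\sqrt{5^r}\rfloor$ is a nice touch the paper handles only implicitly by checking the three final solutions. One small repair is needed: your justification of nondegeneracy, ``$\omega/\bar\omega$ is not a root of unity because $|\omega|^2=5\ne 1$,'' does not work as stated, since $|\omega/\bar\omega|=1$ holds for any pair of complex conjugates. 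The correct (and easy) argument is that the only roots of unity in $\mathbb{Q}(\sqrt{-19})$ are $\pm 1$ and $\omega\ne\pm\bar\omega$; alternatively, $\omega/\bar\omega=\tfrac{-9+\sqrt{-19}}{10}$ has non-integral minimal polynomial $x^2+\tfrac{9}{5}x+1$, so it is not even an algebraic integer. With that one-line fix your proof is complete.
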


\section{Automorphism group of maximal curves of genus $5$ over finite fields with $d(\mathbb F_q)=-19$}\label{section 2}

 Let $C$ be a maximal curve of genus $g$ defined over $\mathbb F_q$. The characteristic polynomial of the Frobenius endomorphism $\pi$ of the Jacobian $\operatorname{Jac(C)}$ can be written  as $$
f(x)=\prod_{i=1}^{2g}(x-\alpha_i)
\in \Bar{\mathbb{Q}}[x].$$ The trace of $\pi$ is given by  $\operatorname{Tr}(\pi)=\sum_{i=1}^g (\alpha_i+\overline{\alpha_i})$, where $\overline{\alpha_i}=\alpha_{g+i}$, see \cite[Proposition 2.1.2 ]{serre2020rational}.
 As $C$ is maximal, we have  $$\#C(\mathbb F_q)=1+q + g\lfloor 2 \sqrt{q}\rfloor=1+q - \sum_{i=1}^g (\alpha_i+\overline{\alpha_i}).$$
Hence, $\operatorname{Tr}(\pi)=- g\lfloor 2 \sqrt{q}\rfloor$. Then by \cite[Theorem 2.1.4]{serre2020rational}, we have that $\alpha_i+ \overline{\alpha_i}=-\lfloor 2 \sqrt{q}\rfloor$.
Moreover, since  $\alpha_i \cdot \overline{\alpha_i}=q$, it follows that for each $i$, the zeroes $\alpha_i$ and $\overline{\alpha_i}$ are the roots of  the polynomial $$x^2 +\lfloor 2 \sqrt{q}\rfloor x+q.$$
Therefore,  the characteristic polynomial of $\operatorname{Jac(C)}$  splits into $g$ copies of $x^2 +\lfloor 2 \sqrt{q}\rfloor x+q$, which is the characteristic polynomial of an elliptic curve $E$ defined over $\mathbb F_q$. Furthermore, if $E$ is an ordinary elliptic curve (which is the case when $d(\mathbb F_q)=-19$, see \cite[Proposition 3.1]{zaytsevoptimal}), then E is maximal, see \cite[Theorem 2.6.3]{serre2020rational}.
  
By the Honda-Tate theorem about isogeny classes of simple abelian varieties, see \cite[Theorem 16.1]{Milne}, it follows that $\operatorname{Jac(C)}$ is isogenous to $E^g$. Also since $E$ is an ordinary elliptic curve, the Jacobian $\operatorname{Jac(C)}$ is an ordinary principally polarized abelian variety.

\begin{rem}
        The discriminant of the polynomial $x^2 +\lfloor 2 \sqrt{q}\rfloor x+q$ coincides with the quantity $d(\mathbb F_q)$ introduced above. It follows that  the Frobenius of $E$ generates the imaginary quadratic  number field $\mathbb{Q}(\sqrt{d(\mathbb F_q)})$. 
\end{rem}

A key tool to study curves over finite fields is Serre's equivalence of categories between the category of ordinary principally polarized abelian varieties over $\mathbb F_q$ isogenous to $E^g$ for some ordinary elliptic curve $E$, and the category of unimodular irreducible  hermitian $ R$-modules without torsion of rank $g$, described in the Appendix by J.-P. Serre in \cite{Lauter} and in \cite{serre2020rational}. To be more precise, the isomorphism classes of the abelian varieties defined over $\mathbb{F}_q$  in the isogeny class of $E^g$  correspond to  the isomorphism classes of $R$-modules that can be embedded as a lattice in the $K$-vector space $K^g$, where $R=\operatorname{End}_{\mathbb{F}_q}(E)$ and $K=\operatorname{Quot}(R)$. If the quotient $\mathbb{Z}[x]/(x^2 +\lfloor 2 \sqrt{q}\rfloor x+q)$ is equal to $\mathcal{O}_K$,  then  $R=\mathcal{O}_K$, where $\mathcal{O}_K$ is the ring of integers of the imaginary  quadratic number field $K $ of discriminant $d(\mathbb F_q)$. Furthermore, when the class number  $\# \operatorname{Cl}(\mathcal{O}_K)$ is equal to one, there is only one isomorphism class of such $\mathcal{O}_K$-modules  and the equivalence of categories  tells us that  the unique isomorphism  class in the isogeny class of $E^g$ corresponds to the unique free $\mathcal{O}_K$-module $\mathcal{O}_K^g$ and the canonical polarization corresponds to a unimodular irreducible  hermitian form $h: \mathcal{O}_K^g \times \mathcal{O}_K^g \rightarrow \mathcal{O}_K$.  

 Hence, applying the equivalence of categories to    $\operatorname{Jac}(C)$ with  $d(\mathbb F_q)$ such that $\mathcal{O}_K$ has class number one, we have that $\operatorname{Jac}(C)$ is isomorphic to $E^g$ over $\mathbb F_q$ and $$\operatorname{Aut}_{\mathbb F_q}(\operatorname{Jac}(C))\cong \operatorname{Aut(\mathcal{O}_K^g},h)$$ where, $\operatorname{Aut}(\mathcal{O}_K^g,h)$ is the automorphism group of the hermitian module $\mathcal{O}_K^g$ in $(K^g,h)$. Then computing  $\operatorname{Aut}_{\mathbb F_q}(\operatorname{Jac}(C))$ reduces to computing the automorphism group of the hermitian module $(\mathcal{O}_K^g,h)$. 
There exists a classification given by A. Schiemann in \cite{Schiemann} of unimodular irreducible hermitian modules together with the order of their automorphisms group for certain values of $d(\mathbb F_q)$ and $g$. The computations of Schiemann's results were made available online by R. Schulze-Pillot in \cite{hermitian}.  Using Torelli's theorem in \cite[Theorem 2.5.9]{serre2020rational}, we can deduce that the order of the automorphism group of the curve as it  follows:  
\begin{align*}
\#\operatorname{Aut}_{\mathbb{F}_q}(\operatorname{Jac}(C))
&= \#\operatorname{Aut}(\mathcal{O}_K^g, h)=
\begin{cases}
\#\operatorname{Aut}(C) & \text{if $C$ is hyperelliptic}, \\
2 \times \#\operatorname{Aut}(C) & \text{otherwise}.
\end{cases}
\end{align*}
\begin{rem}\label{rem aut}
All automorphisms of the curve $C$ are defined over $\mathbb F_q$,  for $d(\mathbb F_q)=-19$ . Indeed, $$\operatorname{Aut}_{\overline{\mathbb F}_q}(C)\subseteq\operatorname{Aut}_{\overline{\mathbb F}_q}(\operatorname{Jac}(C))=\operatorname{Aut}_{\overline{\mathbb F}_q}(E^g)\subseteq \operatorname{End}_{\overline{\mathbb F}_q}(E^g)=\operatorname{Mat}_g(\operatorname{End}_{\overline{\mathbb F}_q}(E)).
$$
    We have that $\mathbb{Z}[x]/(x^2 +\lfloor 2 \sqrt{q}\rfloor x+q) \subset \operatorname{End}_{\mathbb{\overline{F
     }}_q}(E) \subset \mathcal{O}_K$, and since $d(\mathbb F_q)$ is square free and  $d(\mathbb F_q)\equiv 1\bmod 4$, we have   $\mathbb{Z}[x]/(x^2 +\lfloor 2 \sqrt{q}\rfloor x+q)=\mathcal{O}_K$. It follows that    $\operatorname{Mat}_g(\operatorname{End}_{\overline{\mathbb F}_q}(E))=\operatorname{Mat}_g(\mathcal{O}_K)=\operatorname{Mat}_g(\operatorname{End}_{\mathbb F_q}(E))$, where $\operatorname{Mat}_g$ is the algebra of $g\times g$ matrices.
  
\end{rem}

Following the discussion above and looking at the generators of automorphism groups of unimodular irreducible hermitian modules for $g=5$, Zaytsev showed that:  

\begin{thm}[Theorem 4.13 \cite{zaytsevoptimal}]\label{zaytsev}
If there exists a maximal curve of genus $5$ over a finite field $\mathbb{F}_{q}$ with $d(\mathbb F_q)=-19$, then the automorphism group of the curve is isomorphic to the Dihedral group $D_5$.
\end{thm}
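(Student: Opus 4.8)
The plan is to determine the automorphism group of a maximal curve of genus 5 with $d(\mathbb F_q) = -19$. The key tool established earlier is Serre's equivalence of categories, which tells us that $\operatorname{Aut}(C)$ is determined (up to a factor of 2 for non-hyperelliptic curves) by the automorphism group of the hermitian module $(\mathcal O_K^g, h)$, where $K = \mathbb Q(\sqrt{-19})$ has class number one. So the proof should be essentially a computation using Schiemann's classification of unimodular irreducible hermitian modules for $g = 5$ over this specific imaginary quadratic field, together with the order relation $\#\operatorname{Aut}(\mathcal O_K^5, h) = 2\cdot\#\operatorname{Aut}(C)$ (since the hyperelliptic case is already excluded by Zaytsev's Lemma 4.11).

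I let me think about the structure. First I would invoke the classification data (from Schiemann, made available by Schulze-Pillot) to list the isomorphism classes of unimodular irreducible hermitian $\mathcal O_K$-modules of rank 5 for $d = -19$, together with the orders of their automorphism groups. Second, I would examine the generators of each such automorphism group. The goal is to identify which abstract group each $\operatorname{Aut}(\mathcal O_K^5, h)$ is isomorphic to, and then pass to $\operatorname{Aut}(C)$ by quotienting out the factor of 2 coming from the hyperelliptic involution $\pm 1$ acting on the Jacobian (equivalently, the central $-1$ in the hermitian automorphism group). Third, among the resulting candidate groups for $\operatorname{Aut}(C)$, I would check which are consistent with being the automorphism group of a genus-5 curve that is trigonal or a complete intersection of three quadrics, and verify that the only possibility is the dihedral group $D_5$ of order 10.

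I expect the main obstacle to be the honest bookkeeping of the group structure: the raw data from the hermitian-module classification gives only the order of the automorphism group, so one must actually inspect the generating matrices to recognize the abstract isomorphism type, and then correctly account for the central involution when descending from $\operatorname{Aut}(\operatorname{Jac}(C))$ to $\operatorname{Aut}(C)$. In particular, I would need to rule out that the module-automorphism group is, say, a larger dihedral or a product group whose quotient by $\{\pm 1\}$ could be something other than $D_5$, and confirm that $\{\pm 1\}$ is indeed central and acts as the hyperelliptic-type involution so that the quotient is well-defined. The geometric compatibility check in the third step is comparatively routine once the group is pinned down, since $D_5$ arises naturally as the automorphism group acting on the pencil structure of a trigonal or intersection-of-quadrics model; the delicate point is the passage from module data to the precise abstract group.
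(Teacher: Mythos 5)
Your proposal follows essentially the same route as the paper (which defers the details to Zaytsev's Theorem 4.13): Serre's equivalence reduces $\operatorname{Aut}_{\mathbb F_q}(\operatorname{Jac}(C))$ to $\operatorname{Aut}(\mathcal O_K^5,h)$, Schiemann's tables give this group order ($2^2\times 5$, with the $D_5$ generators $R,S$ listed in the paper's appendix), and the Torelli factor of $2$ --- valid since the hyperelliptic case is excluded by Zaytsev's Lemma 4.11 --- yields $\operatorname{Aut}(C)\cong D_5$. The bookkeeping concerns you raise (recognizing the abstract group from generators, centrality of $-1$) are exactly the points the cited computation settles, so the proposal is sound.
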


From now on, we will investigate the existence of a maximal curve $C$ of genus $5$ over $\mathbb F_q$ with $d(\mathbb F_q)=-19$ and $\operatorname{Aut}(C)\cong D_5$. We consider the cases where $C$ is a trigonal curve or a complete intersection of three quadrics in $\mathbb P^4$.

\section{Maximal curves of genus $5$ over $\mathbb F_q$ with $d(\mathbb F_q)=-19$ and $q  \equiv 2 \bmod 5$ }\label{sec: q=2[5]}

 Let $C$ be a curve of genus $5$. If $C$ is not hyperelliptic, then it is canonically described as an intersection of three quadrics in $\mathbb{P}^4$. This intersection can be complete or not. By Petri’s theorem \cite[p.131] {Arbarello}, in the complete intersection case, the homogeneous ideal of the curve is generated by three quadrics.
 
If the intersection is not complete, then the curve is trigonal, that is,  there exists a degree $3$ morphism $\phi: C \to \mathbb{P}^{1}$, see \cite[p. 118]{Arbarello}.
 %If $E$ is a divisor on $C$, we denote by $\mathcal{L}(E)$ the  Riemman-Roch space associated to the divisor $E$ and by $\ell(E)$ it's dimension.
For any unramified $P \in \mathbb{P}^1$, we have the degree $3$ divisor $D=\phi^{-1}(P)$ on $C$, which we can write effectively as $D=P_{1}+P_{2}+P_{3}$. Since $D$ has a positive degree, the dimension $\ell(D)$ of the Riemann-Roch space $\mathscr{L}(D)$ associated to the divisor $D$ is greater than or equal to $2$. Then by the Riemann-Roch theorem,
$$
\ell(\kappa-D)=\ell(D)-3+5-1 \geqslant 3
$$ where $\kappa$ is the canonical divisor. This tells us that we must have at least a three dimensional family of hyperplanes passing through $P_1, P_2$ and $P_3$. Indeed, the elements of $\mathscr{L}(\kappa-D)$ are sections in $\mathscr{L}(\kappa)$ with zeros on $D$. This imposes that the points must be collinear in $\mathbb P^4$. It follows that the dimension of the space $\operatorname{span}(D)$ generated by the intersection of all hyperplanes containing $D$  has dimension $1$. By the geometric Riemann Roch formula (see \cite[Theorem 2.7, p.208]{Miranda}), we have $\ell(D)=\deg D-\dim\operatorname{span}(D)=2$.  Therefore $\ell(\kappa-D)=3$.
Then the complete linear system $|\kappa-D|$ gives a morphism $\varphi_{\kappa-D}: C\rightarrow \mathbb{P}^{2}$  of degree $5$. Let $C'$ be the image of $C$ in $\mathbb{P}^2$. Then $C'$ is a plane curve of degree $5$ and by the genus formula for plane curves, $C'$ has one singular point which is a node.

Remark that the classification by Enrique-Babbage of a curve of genus $5$ also holds over $\mathbb F_q$. The complete intersection case holds over $\mathbb F_q$ (not only over $\overline{\mathbb F}_q$) because the canonical divisor $\kappa$, which gives an embedding of $C$ in $\mathbb P^4$ is defined over $\mathbb F_q$. In the trigonal case as well, the morphism $ C\rightarrow \mathbb{P}^{1}$ is defined over $\mathbb F_q$, see \cite[Section 2.1]{Kudo}.

%The following lemma gives as that such divisor $D$ is unique up to linear equivalence. 
%\begin{lem}
%The divisor $D$ is unique up to linear equivalence.
%\end{lem}
%\begin{proof}
%Let $D^{\prime}$ be a degree 3 divisor such that $D \nsim D^{\prime}$ and $\ell\left(D^{\prime}\right)=2$. Then $\ell\left(D-D^{\prime}\right)=0$.
%We have the following exact sequence,
%$$\xymatrix{
%	0\ar[r]&\mathcal{L}\left(D-D^{\prime}\right)\ar[r] & \mathcal{L}(D) \oplus \mathcal{L}\left(D^{\prime}\right)\ar[r]& \mathcal{L}(D+D^{\prime})
%}.$$
%This implies, $ \ell\left(D+D^{\prime}\right) \geqslant 2 \times \ell(D)=4$.
%We have that $D$ and $D^{\prime}$ are special divisors since $\ell(\kappa-D)>0$ and $\ell\left(\kappa-D^{\prime}\right)>0 $. Since $C$ is not hyperelliptic and $D+D'\neq\kappa $, by Clifford's theorem {\color{red} reference}, it follows that $\ell(D+D')<4$. We conclude by contradiction.
%\end{proof}

%\begin{pro}\label{autormophism}
%All automorphisms of a plane singular model of a trigonal curve of genus 5 are linear.\end{pro}
%\begin{proof}See, \cite[Lemma 2.1.2]{Kudo}.\end{proof}
The following lemma shows that the automorphism group of the trigonal curve $C$ is exactly the linear automorphism group of the singular plane model curve $C'$.

\begin{lem}[ Lemma 2.1.2, \cite{Kudo}] \label{automorphismtrigonal}
Let $C$ be a trigonal curve of genus $5$. There is a canonical isomorphism from $\operatorname{Aut}(C)$ to $\{\varphi \in \operatorname{Aut}(\mathbb P^2) \mid \varphi(C')=C'\}$.
\end{lem}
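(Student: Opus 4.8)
The plan is to produce two mutually inverse group homomorphisms between $\operatorname{Aut}(C)$ and $G:=\{\varphi \in \operatorname{Aut}(\mathbb P^2)\mid \varphi(C')=C'\}$, using the concrete description of the plane model already obtained in the text: the map $\nu:=\varphi_{\kappa-D}\colon C\to C'\subset\mathbb P^2$ is attached to the complete linear system $|L|$ of the degree-$5$ line bundle $L=\mathcal O_C(\kappa-D)$, where $D$ is a fixed member of the trigonal pencil $g^1_3=|D|$, and $\nu$ is the normalization of the nodal quintic $C'$. The entire argument hinges on a single geometric input: the $g^1_3$ on $C$ is \emph{unique}. Granting this, an automorphism cannot carry the trigonal pencil to a different one, and this will force it to preserve the class of $L$, which is exactly what is needed to descend it to a linear automorphism of $C'$.

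So the first step is to prove uniqueness of the $g^1_3$. Since $C$ is trigonal of genus $5$, it is not hyperelliptic, so the $g^1_3$ is base-point-free (a base point would yield a $g^1_2$). Suppose there were two distinct base-point-free pencils $|D_1|,|D_2|$ of degree $3$, giving $\phi_1,\phi_2\colon C\to\mathbb P^1$; consider $(\phi_1,\phi_2)\colon C\to\mathbb P^1\times\mathbb P^1$. If this map is birational onto its image, the image is a curve of bidegree $(3,3)$ on the quadric, whose arithmetic genus is $(3-1)(3-1)=4$, forcing $g(C)\le 4$, a contradiction; and if it is not birational, then $\phi_1$ and $\phi_2$ factor through a common pencil, i.e.\ $|D_1|=|D_2|$. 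Hence for genus $5$ the $g^1_3$ is unique. I would also note that, being unique, it is automatically fixed by $\operatorname{Gal}(\overline{\mathbb F}_q/\mathbb F_q)$ and thus defined over $\mathbb F_q$, so the argument is insensitive to the ground field.

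For the forward map $\rho\colon\operatorname{Aut}(C)\to G$, take $\alpha\in\operatorname{Aut}(C)$. The canonical class is intrinsic, so $\alpha^*\kappa\sim\kappa$; and by uniqueness $\alpha$ preserves the pencil $|D|$, so $\alpha^*D$ is again a member of $|D|$ and in particular $\alpha^*D\sim D$. Combining these gives $\alpha^*L=\alpha^*\mathcal O_C(\kappa-D)\cong\mathcal O_C(\kappa-D)=L$. Consequently $\alpha^*$, together with a choice of isomorphism $\alpha^*L\cong L$, induces a linear automorphism of $H^0(C,L)\cong\overline{\mathbb F}_q^{\,3}$ that is well defined modulo scalars, hence an element $\rho(\alpha)\in\operatorname{PGL}_3=\operatorname{Aut}(\mathbb P^2)$. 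By construction $\rho(\alpha)$ intertwines $\nu$ with $\alpha$, i.e.\ $\rho(\alpha)\circ\nu=\nu\circ\alpha$, so $\rho(\alpha)(C')=\rho(\alpha)(\nu(C))=\nu(C)=C'$ and therefore $\rho(\alpha)\in G$; that $\rho$ is a homomorphism is immediate from the functoriality of pullback. For the inverse $\sigma\colon G\to\operatorname{Aut}(C)$, any $\varphi\in G$ restricts to an automorphism of the variety $C'$, and since $C$ is the normalization of $C'$, the universal property of normalization lifts $\varphi|_{C'}$ \emph{uniquely} to $\sigma(\varphi)\in\operatorname{Aut}(C)$ with $\nu\circ\sigma(\varphi)=\varphi\circ\nu$. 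The identities $\rho\circ\sigma=\mathrm{id}$ and $\sigma\circ\rho=\mathrm{id}$ then follow from the intertwining relations together with the fact that $\nu$ is birational (hence cancellable).

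I expect the main obstacle to be the uniqueness of the $g^1_3$, and, closely tied to it, the subtle point in the forward direction that $\alpha$ may genuinely permute the members of the pencil and move the specific trisecant divisor $D$ to a different divisor $\alpha^*D\ne D$. The resolution is that preserving the \emph{pencil} already suffices: since all members of $|D|$ are linearly equivalent, one still has $\alpha^*D\sim D$ and hence $\alpha^*L\cong L$, so the descent to $\mathbb P^2$ goes through even though $C'$ itself depends on the chosen $D$. The remaining verifications (that $\rho$ and $\sigma$ are group homomorphisms and mutually inverse) are routine once the intertwining identities are in place.
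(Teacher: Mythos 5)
Your proof is correct in substance, and the first thing to note is that the paper contains no proof of this statement at all: the lemma is imported verbatim from Kudo (Lemma~2.1.2 of \cite{Kudo}), so there is no internal argument to compare against. Your proof is essentially the standard one underlying that citation: uniqueness of the $g^1_3$ on a trigonal curve of genus $5$, descent of an automorphism through the linear system $|\kappa-D|$ (using only that $\alpha^*D\sim D$, since $\alpha$ need only preserve the pencil, a point you correctly isolate), and lifting through the normalization $\nu\colon C\to C'$. Your uniqueness argument via $\mathbb P^1\times\mathbb P^1$ is sound and characteristic-free: in the birational case the image has bidegree $(3,3)$, hence arithmetic genus $4<5$; in the non-birational case the degree of $C$ over its image divides $3$, forcing the image to be a $(1,1)$-curve, i.e.\ the graph of an automorphism of $\mathbb P^1$, whence the two pencils coincide. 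Two points are left implicit and are worth recording. First, for $\rho\circ\sigma=\mathrm{id}$, cancelling the birational map $\nu$ only yields that $\rho(\sigma(\varphi))$ and $\varphi$ agree pointwise on $C'$; to conclude equality in $\operatorname{PGL}_3(\overline{\mathbb F}_q)$ one should add that the fixed locus of a nontrivial projectivity of $\mathbb P^2$ is a union of linear subspaces, so it cannot contain an irreducible quintic. Second, the paper allows the singular point of $C'$ to be a node \emph{or a cusp} (the text mentions both), and your normalization argument covers both cases without change, since the universal property of normalization is all that is used.
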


\begin{thm}\label{Theorem 3.2}
There is no maximal curve of genus $5$ over $\mathbb F_q$, where $q  \equiv 2 \bmod 5$ and $d(\mathbb F_q)=-19$. 
\end{thm}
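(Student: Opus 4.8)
The plan is to invoke Zaytsev's Theorem \ref{zaytsev}: if such a curve $C$ exists then $\operatorname{Aut}(C)\cong D_5$, so $C$ carries an automorphism $\sigma$ of order $5$, which by the Remark above is defined over $\mathbb{F}_q$. Since $C$ cannot be hyperelliptic (by \cite[Lemma 4.11]{zaytsevoptimal}), the Enrique--Babbage classification leaves exactly two cases, and I would derive a contradiction in each: $C$ is trigonal, or $C$ is a complete intersection of three quadrics in $\mathbb{P}^4$. The arithmetic input throughout is that $q\equiv 2\bmod 5$ forces the multiplicative order of $q$ modulo $5$ to be $4$; hence $5\nmid q-1$, the polynomial $\Phi_5(x)=x^4+x^3+x^2+x+1$ is irreducible over $\mathbb{F}_q$, and a primitive fifth root of unity generates $\mathbb{F}_{q^4}$ over $\mathbb{F}_q$ and lies in no smaller extension.

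\textbf{The trigonal case.} Here Lemma \ref{automorphismtrigonal} identifies $\operatorname{Aut}(C)$ with the group of linear automorphisms of the nodal plane quintic $C'$. As all automorphisms of $C$ are defined over $\mathbb{F}_q$ and the plane model is rational, this gives an embedding $D_5\cong\operatorname{Aut}(C)\hookrightarrow \operatorname{PGL}_3(\mathbb{F}_q)$. I would then count orders: since $|\operatorname{PGL}_3(\mathbb{F}_q)|=q^3(q-1)^2(q+1)(q^2+q+1)$, reducing each factor modulo $5$ under $q\equiv 2$ gives $q^3\equiv 3$, $(q-1)^2\equiv 1$, $q+1\equiv 3$, and $q^2+q+1\equiv 2$, so $5\nmid|\operatorname{PGL}_3(\mathbb{F}_q)|$. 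By Lagrange no subgroup of order $10$ can exist, contradicting $D_5\le\operatorname{PGL}_3(\mathbb{F}_q)$.

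\textbf{The complete intersection case.} Now the canonical embedding $C\hookrightarrow\mathbb{P}^4$ is defined over $\mathbb{F}_q$, so $\operatorname{Aut}(C)\hookrightarrow\operatorname{PGL}_5(\mathbb{F}_q)$ and $\sigma$ becomes an order-$5$ element there. I would lift $\sigma$ to $M\in\operatorname{GL}_5(\mathbb{F}_q)$ with $M^5=\lambda I$; because $5\nmid q-1$ the fifth-power map on $\mathbb{F}_q^\times$ is bijective, so after rescaling we may assume $M^5=I$ with $M$ non-scalar. As $p\neq 5$, $M$ is diagonalizable with fifth-root-of-unity eigenvalues, and since $x^5-1=(x-1)\Phi_5(x)$ is the factorization into irreducibles over $\mathbb{F}_q$, the only possible characteristic polynomial of a non-scalar such $M$ is $(x-1)\Phi_5(x)$. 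Therefore $\sigma$ has exactly five fixed points on $\mathbb{P}^4$: one $\mathbb{F}_q$-rational point $P_0$ (the eigenline for $1$) together with a single Frobenius orbit $\{Q_1,Q_2,Q_3,Q_4\}$ over $\mathbb{F}_{q^4}$ (the eigenlines for the primitive roots). The fixed locus of $\sigma$ on $C$ equals $C\cap\{P_0,Q_1,Q_2,Q_3,Q_4\}$, which is $\operatorname{Gal}(\overline{\mathbb{F}}_q/\mathbb{F}_q)$-stable; since the $Q_i$ form one orbit, its cardinality lies in $\{0,1,4,5\}$.

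Finally I would compare this with Riemann--Hurwitz for the tame quotient $C\to C/\langle\sigma\rangle$: writing $g'$ for the quotient genus and $k$ for the number of fixed points, $2\cdot 5-2=5(2g'-2)+4k$ yields $5g'+2k=9$, whose only non-negative solution is $g'=1,\ k=2$. Thus $\sigma$ has exactly two fixed points on $C$, contradicting the values $0,1,4,5$ available from the ambient geometry. Both cases being impossible, no such curve exists. I expect the complete intersection case to be the main obstacle: the trigonal case reduces to a one-line divisibility check, whereas here one must carefully rule out every eigenvalue configuration of the order-$5$ element except $(x-1)\Phi_5(x)$, correctly determine the rationality and orbit structure of its five fixed points, and reconcile this with the Riemann--Hurwitz count.
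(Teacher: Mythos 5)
Your proposal is correct, and while your trigonal case is the paper's argument verbatim (the divisibility $5\nmid\#\operatorname{PGL}_3(\mathbb F_q)$ for $q\equiv 2\bmod 5$), your complete intersection case takes a genuinely different route. The paper also normalizes the order-$5$ element to the companion matrix of $x^5-1$ (your factorization $(x-1)\Phi_5(x)$ is the same linear-algebra fact), but then it exploits the induced action on the $3$-dimensional space $V=\langle Q_1,Q_2,Q_3\rangle$: since $5\nmid\#\operatorname{PGL}_3(\mathbb F_q)$, the action on $\mathbb P(V)$ is trivial, which forces each $Q_i$ into the span of the three shift-invariant quadrics $P_1,P_2,P_3$; the variety $V(P_1,P_2,P_3)$ is then shown to be a one-dimensional closed subset contained in $V(Q_1,Q_2,Q_3)$ and singular at $[1:\zeta:\zeta^2:\zeta^3:\zeta^4]$, contradicting smoothness of $C$. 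You instead never touch the equations: you count the fixed points of the order-$5$ element via its eigenstructure (one rational eigenpoint plus a single Frobenius orbit of four over $\mathbb F_{q^4}$, so $k\in\{0,1,4,5\}$ fixed points on $C$) and contradict the tame Riemann--Hurwitz constraint $5g'+2k=9$, which forces $k=2$. It is worth noting that the paper's singular point is precisely one of your eigenpoints $Q_i$, so the two arguments detect the same geometric obstruction. Your approach buys uniformity and economy: it uses only the canonical embedding $C\hookrightarrow\mathbb P^4$ and not the complete-intersection structure, so it would in fact also dispatch the trigonal (non-hyperelliptic) case and makes the case split redundant; the paper's equation-level analysis buys explicit invariant quadrics, which it reuses in Section \ref{Section4} to write down models when $q\equiv 1\bmod 5$, something your fixed-point count cannot provide. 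All the supporting steps in your argument check out: the rescaling of the lift using bijectivity of the fifth-power map on $\mathbb F_q^\times$, diagonalizability from separability of $x^5-1$ (as $p\neq 5$), irreducibility of $\Phi_5$ from $\operatorname{ord}_5(q)=4$, transitivity of Frobenius on the four non-rational eigenpoints, tameness of the degree-$5$ quotient, and the rationality of the automorphisms via the paper's Remark 2.2.
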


\begin{proof}
Let $C$ be a maximal curve of genus $5$ defined over $\mathbb F_q$. If $C$ is trigonal, Lemma \ref{automorphismtrigonal} and Theorem \ref{zaytsev} imply that $D_5=\operatorname{Aut}(C)$ is a subgroup of $\operatorname{PGL}_3(\mathbb F_q)$. However, $q \not\equiv 0,\pm 1 \bmod 5$ implies that $5$ does not divide $\# \operatorname{PGL}_3(\mathbb F_q)$. We conclude that the curve $C$ cannot be trigonal.

Now, assume $C$ is the complete intersection of three linearly independent quadrics $Q_1,Q_2$ and $Q_3$ in $\mathbb P^4$.  As $C$ is non-singular and non-hyperelliptic, $\operatorname{Aut}(C) \subset \operatorname{PGL}_5(\mathbb{F}_q)$ (see \cite[Theorem 1.3]{Nart}). Let $\varphi\in\operatorname{Aut}(C)$ be an element of order $5$. Let $V:= \langle Q_1, Q_2, Q_3 \rangle $ be the $\mathbb{F}_q$-vector space generated by the three quadrics, viewed as a subspace of the vector space generated by all quadrics monomials. Then $\varphi$ induces an automorphism $\Tilde{\varphi}\in \operatorname{Aut}(V)\subset \mathbb{P}(V)=\operatorname{PGL}_3(\mathbb{F}_q)$. The order of $\tilde{\varphi}$ is $1$ or $5$ but since $q \not\equiv 0,\pm 1 \bmod 5$ it follows that the order of $\tilde{\varphi}$  is $1$. Hence $\tilde{\varphi}$ acts trivially on the vector space $V$.
Let $M$ be the matrix associated to $\varphi$ in $\operatorname{PGL}_5(\mathbb{F}_q)$. Since  $5\nmid q-1$, there exists an equivalent matrix $M'$ such that $M'^5=\operatorname{Id}$.  Thus, $M'$ is equivalent to the companion matrix of the polynomial $X^5-1$.
Therefore, after a change of basis the action of $\varphi$ on the coordinate points of $C$ is given as follows $\varphi: [x:y:z:u:v]\rightarrow [y:z:u:v:x]$. Applying the action of $\varphi$ together with the fact that $\tilde{\varphi}$ acts trivially on $V$, we can write the quadrics in the following form:
$$Q_i=\lambda_i(x^2+y^2+z^2+u^2+v^2)+\mu_i(xy+yz+zu+uv+xv)+\vartheta_i(xz+yu+zv+xu+yv),$$
where $\lambda_i,\mu_i, \vartheta_i \in \mathbb F_q$. Let $P_1= x^2+y^2+z^2+u^2+v^2$, $P_2= xy+yz+zu+uv+xv$ and $P_3=xz+yu+zv+xu+yv$. These quadrics are linearly independent and the quadrics  $Q_{1}, Q_{2}, Q_{3}$ belong to the $\mathbb{F}_q$-vector space generated by $P_1,P_2$ and $P_3$.  The quadrics $P_1,P_2, P_3$ define over $\mathbb{F}_q$ a singular variety of dimension $1$ with a singular point $[1:\zeta: \zeta^2:\zeta^3:\zeta^4]\in \mathbb P(\mathbb{F}_{q^4}^5)$, where $\zeta$ is a primitive $5$-th root of unity.  Moreover, this variety is a closed subset of $\mathbb{P}^4$, contained  in the curve defined by the complete intersection of $Q_{1}, Q_{2}, Q_{3}$. It follows that the variety defined by  $Q_1,Q_2, Q_3$ is singular.  We conclude that a maximal curve of genus $5$ cannot be a complete intersection of three quadrics in $\mathbb P^4$.
\end{proof}

%The theorem asserts that a  maximal curve of genus $5$ of the type trigonal or a complete intersection of three quadrics in \mathbb P^4$ cannot be found over the finite field $\mathbb F_q$ of discriminant $-19$ and $q \equiv %2,3 \bmod 5$.
%Moreover, the above theorem gives an update in the many points website for the case when $q=47$ and $g=5$.   

%In the following section, we give a brief recall of the general theory relating projective representations and central group extensions which will be useful to determine the model of the curve when $q\equiv 1\bmod 5$.

\section{Some projective representations}\label{sec: proj}
In this Section, we are interested in studying projective representations in terms of linear representations of central group extensions. We will use the theory of projective representations later in Section \ref{sec: q=1[5]} to determine a model of the genus $5$ curve in the complete intersection case.

Let $G$ be a finite group and $K$ be a field. To any projective representation 
$$\rho: G\longrightarrow \operatorname{PGL}(W),$$ where $W$ is a finite dimensional $K$-vector space, we can associate a map of sets $\rho': G\longrightarrow\operatorname{GL}(W)$ called a lift and a cohomology class $[\alpha]\in \operatorname{H}^2(G,K^\times)$ called the multiplier such that 
$$\rho'(g)\rho'(h)= \alpha(g,h)\rho'(gh)$$
for all $g,h\in G$, $\rho=\Pi\circ \rho'$, where $\Pi:  \operatorname{GL}(W)\longrightarrow \operatorname{PGL}(W)$. 

More precisely, for each $g\in G$, we define  $\rho'(g)$  as the unique element in a fixed set of coset representatives of $\operatorname{GL}(W)$ in $\operatorname{PGL}(W)$ such that $\Pi(\rho'(g))=\rho(g)$. Since for  $g,h\in G$, we have $$\rho'(gh)K^\times=\rho'(g)\rho'(h)K^\times,$$
there is a unique $\alpha(g,h)\in K^\times$ such that  $$\rho'(g)\rho'(h)=\alpha(g,h)\rho'(gh).$$
It turns out that $\alpha: G\times G\longrightarrow K^\times$ is a $2$-cocycle. 

In particular, the class of $\alpha$ is trivial if and only if  $\rho$ is projectively equivalent to a representation $\Pi\circ T$, where  $T:G\longrightarrow \operatorname{GL}(W)$ is a linear representation  (see \cite[Lemma 2.3.1]{zKarpilovsky}).  In this situation, we say 
that the projective representation $\rho$ of $G$ is liftable to a linear representation $T$ of $G$. 

We remark that for a given lift  $\rho'$ of $\rho$, there is a unique choice of $\alpha$ and then a unique class $[\alpha]\in \operatorname{H}^2(G,K^\times)$. There may be more than one lift associated to a projective representation. However, it is not difficult to show that different choices of lifts  will produce the same cohomology class in $\operatorname{H}^2(G,K^\times) $. Therefore, the cohomology class of $\rho$ is independent of the choice of a lift. 

Next we discuss the connection between projective representations and central extensions. 
\begin{defn}
 A central extension of $G$ is an exact sequence of groups $$\xymatrix{
          1\ar[r] &  Z \ar[r]^{i}& E \ar[r]^{p} & G\ar[r] &  1 
                    },$$
where  $Z$ is a subgroup of the center of $E$. We denote it $(E,p)$. When the group $Z$ is fixed, we say that  $E$ is a central extension of $G$ by $Z$.

We say that two extension $(E_1,p_1)$ and $(E_2,p_2)$ are equivalent if there is a group homomorphism $\varphi: E_1\longrightarrow E_2$ such that the following diagram commutes.
$$\xymatrixcolsep{3.5pc}\xymatrix{
  1\ar[r]& Z \ar[r] \ar[d]_{\id} & E_1 \ar[d]^{\varphi}  \ar[r]^{ p_1} &  G \ar[d]^{\id} \ar[r]& 1\\   
1\ar[r] & Z\ar[r] & E_2  \ar[r]^{p_2}   & G \ar[r]& 1 }$$
\end{defn}

\begin{thm}[Theorem 6.6.3 \cite{weibel1994introduction}]
The equivalence classes of extensions of $G$ by $Z$  are in one-to-one correspondence with the cohomology group $H^2(G, Z)$. In particular, the trivial class corresponds to the split extension.
\end{thm}

Let $Z$ be any subgroup of $K^\times$ seen as a trivial $G$-module and let $[\beta] \in H^2(G, Z)$.  We define a set
$$E_\beta:=G\times Z$$ and a multiplication $$(g,z)(h,t):=(gh, \beta(g,h)zt)$$ for all $(g,z), (h,t)\in E_\beta$.
Then $E_\beta$ with this multiplication is a group with the identity element $(1,1)$ and the inverse given by $(g,z)^{-1}=(g^{-1}, \beta(g,g^{-1})^{-1}z^{-1}).$ 
We define the maps $$\begin{array}{l}
     i: Z\longrightarrow E_\beta, \, z\mapsto (1,z ), \quad \text{ and}\quad
     
    p: E_\beta \longrightarrow G, \, (g,z)\mapsto g.
\end{array}$$
It is not difficult to see that they are group homomorphisms and 
$$ \xymatrix{
          1\ar[r] &  Z \ar[r]^{i}& E_\beta \ar[r]^{p} & G\ar[r] &  1 
                    }$$  is a central extension. 
The next proposition tells us when we can associate to a projective representation of $G$ a linear representation of a central extension of $G$. 
\begin{pro}[ Proposition 1.8, \cite{GE}]\label{prop:lift}
Let $\rho:G\longrightarrow \operatorname{PGL}(W) $ be a projective representation. Let  $\rho': G\longrightarrow \operatorname{GL}(W)$  and $\alpha\in Z^2(G, K^\times)$ be respectively a lift and the multiplier of $\rho$.  If there is a homomorphism $\varphi: Z\longrightarrow K^\times$, where $Z$ is a non trivial abelian group and $\beta\in Z^2(G, Z)$ such that $\varphi\circ \beta=\alpha$, then there is a  group homomorphism $T: E_\beta\longrightarrow \operatorname{GL}(W)$ such that the following diagram commutes: $$\xymatrixcolsep{3.5pc}\xymatrix{
  1\ar[r]& Z \ar[r] \ar[d]_{\varphi} & E_\beta \ar[d]^{T}  \ar[r]^{ p} &  G \ar[d]^{\rho} \ar[r]& 1\\   
1\ar[r] & K^{\times}\ar[r] & \operatorname{GL}(W)  \ar[r]^{\Pi}   & \operatorname{PGL}(W) \ar[r]& 1 }$$
\end{pro}

The Proposition \ref{prop:lift} also tells us that, for any projective representation of $G$ with multiplier $\alpha$, there is an associated linear representation of $E_\beta$. In particular, if the class of $\alpha$ is non-trivial, then the class of $\beta$ must be non-trivial since $\varphi$ is a group homomorphism. In this case, $E_\beta$ is a non-split extension.

\section{Maximal curves of genus $5$ over $\mathbb F_q$ with $d(\mathbb F_q)=-19$ and $q \equiv  1 \bmod 5$}\label{sec: q=1[5]}

In this section, we describe the potential equations of a maximal curve $C$ over $\mathbb F_q$ when it is trigonal or an intersection of three quadrics in $\mathbb{P}^4$ under the conditions $q \equiv 1 \bmod 5$ and $d(\mathbb F_q)=-19$.

%Let $C$ be a maximal curve of genus $5$. We note that when $q \equiv 1 \bmod 5$, $\mathbb F_q$ contains a primitive $5$-th root of unity. %Suppose $C$ is a trigonal curve and let $C'$ be its singular plane model. The curve $C'$ is given by a homogeneous polynomial of degree $5$ in $\mathbb F_q[X,Y,Z]$ as follows,
%$$F=\sum_{\substack{0<i,j,k \leq 5 \\ i+j+k=5}} a_{ijk}X^i Y^j Z^k.$$

%\begin{thm} A maximal curve of genus 5 over $\mathbb F_q$, where $q\equiv1 \bmod 5$ and  $d(\mathbb F_q)=-19$ cannot be trigonal.\end{thm}

Let $C$ be a maximal trigonal curve of genus $5$ defined over $\mathbb F_q$. Let $C'$ denote the singular plane model of $C$. Then $C'$ can be defined  by a homogeneous polynomial of degree $5$ in $\mathbb F_q[X,Y,Z]$ of the form
$$F=\sum_{\substack{0<i,j,k \leq 5 \\ i+j+k=5}} a_{ijk}X^i Y^j Z^k,$$
where $a_{ijk}\in \mathbb F_q$. Since $q\equiv 1 \bmod 5$, $\mathbb F_q$ contains a primitive $5$-th root of unity which we denote $\zeta$.

Recall that by Theorem \ref{zaytsev} the automorphism group of $C$ is $D_5$.
We aim to reduce the number of parameters of $F$ by applying the action of the automorphism group $\operatorname{Aut}(C)$ on the homogeneous polynomial $F$. Let $ \varphi$ and  $\sigma$ be the generators of $D_5$,  where $\varphi$ is of order $5$ and $\sigma$ is of order $2$. By Lemma \ref{automorphismtrigonal}, the automorphisms $\varphi$ and $\sigma$ induce  linear automorphisms $\varphi'$ and $\sigma'$, respectively, on $C'$, each of the same order as the original automorphisms. Since $q\equiv 1 \bmod 5$, after a suitable linear change of coordinates, $\varphi'$ can be given as 
$$\varphi': [X:Y:Z] \to [X:\zeta^nY:\zeta^mZ]$$ where  $1\leq n,m \leq 4.$ The condition that  $\varphi'$ is a generator of order $5$, together with the relation $\varphi'\sigma'\varphi'=\sigma'$, implies by a direct matrix computation that $m\neq n$ and $m+n\equiv 0\bmod 5$. Moreover, the condition $\sigma'^2=id$ implies that $\sigma'$ has the form
$$\sigma'=\begin{bmatrix}
    \pm1&0&0\\
    0&0&\zeta^m\\
    0&\zeta^n &0
\end{bmatrix},$$
where $\zeta^m=\zeta^{-n}$. Hence, the cases to consider are $(m,n)=(1,4)$ and $(m,n)=(2,3)$.    Applying the action of $\varphi'$ and $\sigma'$ for all cases gives the same model of the curve, 
$$F(X,Y,Z)=a_1X^5+a_2Y^5+a_3XY^2Z^2+a_4X^3YZ+a_2Z^5.$$
Let $P$ be the singular point of $C'$, then the point $P$ is fixed by $D_5$ and must be the only point fixed by $D_5$ in $\mathbb{P}^2$. Hence, $P=[1:0:0]$ and it follows that the model of the curve $C'$ is given by: 
$$F(X,Y,Z)=Y^5+a_3XY^2Z^2+a_4X^3YZ+Z^5.$$
Using MAGMA, we compute the normalization $C$ of $C'$ and then determine the number of $\mathbb F_q$-rational points of $C$ for each $q \in \{61,311,761,1061,1811,3911\}$, from which we deduce that there is no maximal trigonal curve over these fields.

%In particular, since $C$ is a normalization of $C'$, using MAGMA and the model of $C'$, one checks the number of points that there is no maximal trigonal curve over $\mathbb F_{q}$ with $q=61,311,761,1061,1811,3911$.

%Up to an automorphism of $\mathbb P^2$, we assume that the singular point is $P=[0:0:1]$. Applying the action of $\varphi'$ for all possible values of $n$ and $m$ we obtain the following families:
%$$\begin{array}{ll}F_1=XYZ^3+ a_1X^2Y^2Z+a_2X^5+Y^5 \hspace{2cm} \text{ (node case)}, \\F_2=Y^2Z^3+a_1Y^3Z^2+a_2Y^4Z+a_3X^5+Y^5 \hspace{1cm}  \text{ (cusp case)}.\end{array} $$ 

 Now, suppose the curve  $C$ is given by a  complete intersection of three quadrics $Q_1, Q_2$ and $Q_3$ in $\mathbb P^4$. As above we want to use the information about the automorphism group of the curve to reduce the number of parameters of the quadrics.
 
Let $R,S \in \operatorname{Mat}_5(\mathbb{Z}[\gamma])$ be the  matrices generating $D_5$ in $\operatorname{Aut}(\mathcal{O}_K^5,h)$, where $\gamma = \frac{1+\sqrt{-19}}{2}$. An explicit description of these matrices appears in the appendix A. The minimal polynomial of $\gamma$ is $x^2-x+5$ and it splits over $\mathbb F_q$. Therefore, we can reduce $\gamma $ modulo a prime $\mathfrak{q}$ in $\mathcal{O}_K$ above $q$. This 
%As $d(\mathbb{F}_q) = -19$, the integer $a := \lfloor 2\sqrt{q} \rfloor$ satisfies $a \equiv \sqrt{-19} \mod q$. This congruence induces the reduction $\gamma \equiv \frac{1+a}{2} \mod q$, 
allows us to reduce $R$ and $S$ modulo $\mathfrak{q}$ to obtain the matrices $\overline{R},\overline{S} \in \operatorname{Mat}_5(\mathbb{F}_q)$. These reduced matrices generate the automorphism group $\operatorname{Aut}_{\mathbb{F}_q}(C)$ of the curve $C$.
Since $\mathbb F_q$ contains a primitive $5$-th root of unity $\zeta$, there is a basis of $\mathbb F^5_q$ where the matrix $\overline{R}$ has a diagonal form with powers of $\zeta$ in the diagonal entries and the matrix $\overline{S}$ is a matrix whose nonzero entries are also powers of $\zeta$. In  a fixed basis, the matrices are of the form:

$$\overline{R}=\begin{bmatrix}
     \zeta^3 & 0  & 0  & 0 & 0\\
     0  & \zeta^4  & 0 & 0 & 0\\
     0 & 0 & \zeta^2  & 0 & 0\\
     0 & 0 & 0 & \zeta  & 0\\
      0 & 0 & 0 & 0 & 1
\end{bmatrix}  \quad \overline{S}=\begin{bmatrix}
    0 & 0  & \zeta^2  & 0 & 0\\
     0  & 0  & 0 & \zeta & 0\\
     \zeta^3 & 0 & 0  & 0 & 0\\
     0 & \zeta^4 & 0 & 0  & 0\\
       0 & 0 & 0 & 0 & 1
\end{bmatrix}. $$ 

As the vanishing ideal the curve $C$ is generated by the three quadrics, the matrices $\overline{R}$ and $\overline{S}$ induce the automorphisms $\tilde{R}$ and $\tilde{S}$ of the same order, respectively, on the $\mathbb F_q$-projective space $\mathbb P(V)$ where $V=\langle  Q_1, Q_2, Q_3 \rangle$ is generated by the quadrics. In contrast to the trigonal case, here we do not only require the equation of the curve to remain invariant under the automorphisms $\overline{R}$ and $\overline{S}$, but we also want the vector space $V$ to remain stable under the action of $\tilde{R}$ and $\tilde{S}$.  Therefore, our goal is to determine the quadrics $Q_1$, $Q_2$ and $Q_3$ that satisfy these conditions. For this, we need to determine $\tilde{R}$ and $\tilde{S}$.

 The projective space $\mathbb P(V)$ is a  $3$-dimensional faithful 
 projective representation of $D_5$. To simplify the notation, we denote the projective space $\mathbb{P}(V)$ with $V$. In order to determine the model of the curve, we will use the theory developed in Section \ref{sec: proj} to study projective representations of $D_5$ over $\mathbb{F}_q$. Since projective representations of $D_5$ over $\mathbb F_q $ lead to classes in the cohomology group $H^2(D_5,\mathbb F_q^{\times})$, we will need the following proposition.

\begin{pro}\label{prop: h2}
Suppose that  $q\equiv 1\bmod 5$ and $D_5$ acts trivially on $\mathbb{F}_q^\times$, then $H^2(D_5,\mathbb{F}_q^\times)\cong C_2$, where $C_2$ is the cyclic group of order $2$.
\end{pro}

\begin{proof}
Let $r$ be the largest integer such that $2^r$ divides $q-1$.  We can write $\mathbb{F}_q^{\times}=C_{2^r}\times C_m$, where $m$ is coprime to $2$. Since $D_5$ acts trivially on $\mathbb{F}_q^\times$, then it acts trivially on $C_{2^r}$  and $C_m$ and we have 
$$ H^2(D_5,\mathbb{F}_q^\times)= H^2(D_5, C_{2^r})\oplus H^2(D_5, C_m).$$ Moreover, by \cite[p.164]{weibel1994introduction}, there is an exact sequence $$ \xymatrix{
          0\ar[r] &  \operatorname{Ext}^1(H_{1}(D_5,\mathbb{Z}), C_m) \ar[r]& H^2(D_5,C_m) \ar[r] & \operatorname{Hom}(H_2(D_5,\mathbb{Z}), C_m)\ar[r] &  0.       }$$ 
The group $H_2(D_5,\mathbb{Z})$ is trivial (see \cite[p.197]{weibel1994introduction}), and we have $H_{1}(D_5,\mathbb{Z})=D_5^{ab}=C_2$. Therefore, we obtain the isomorphism 
\begin{align*}
H^2(D_5, C_m) 
&\cong \operatorname{Ext}^1(C_2, C_m)\cong C_m / 2C_m=
\begin{cases}
0, & \text{if } \gcd(m,2)=1, \\
C_2, & \text{if } 2 \mid m.
\end{cases}
\end{align*} 
\end{proof}
By Remark \ref{rem aut} and Theorem \ref{zaytsev}, the group $\operatorname{Aut}_{\mathbb{F}_q}(C)=D_5$ acts trivially on $\mathbb{F}_q^\times$. It follows from Proposition \ref{prop: h2} that projective representations of $D_5$ over $\mathbb{F}_q$ can be attached to two possible classes in $H^2(D_5,\mathbb{F}_q^\times)$. The trivial class corresponding to liftable representations  and the non-trivial class corresponding to non-liftable representations of $D_5$. 

By  the proof of Proposition \ref{prop: h2}, we have the following:  $$H^2(D_5,\mathbb{F}_q^\times)=  H^2(D_5, C_{2^r})\cong C_2.$$ 
 This  tells us that any class of $H^2(D_5, C_{2^r})$ is the same as the the  class of  $H^2(D_5,\mathbb{F}_q^\times)$. Therefore, we can view the representative $\alpha$ of a  class in $H^2(D_5,\mathbb{F}_q^\times)$ as $2$-cocycle with values in $C_{2^r}$.
 
 Let $ (E_\alpha,p)$ be the unique (up to equivalence)  central extension of $D_5$ by $C_{2^r}$ corresponding to the class of $\alpha$ in $H^2(D_5, C_{2^r})$. Let 
 $$\varphi: C_{2^r}\longrightarrow \mathbb{F}_q^\times$$ 
 be the inclusion. %Since $\varphi\circ \alpha= \alpha$, 
 Then by Proposition \ref{prop:lift}, for any projective representation $\rho$ of $D_5$ with multiplier $\alpha$, there exists a linear representation
 $$T: E_\alpha\longrightarrow \operatorname{GL}(V)$$ 
 such that $\rho\circ p=\Pi\circ T$, where $\Pi:  \operatorname{GL}(V)\longrightarrow \operatorname{PGL}(V)$ and $p: E_\alpha\longrightarrow D_5$.  
 
 Hence, any projective representation of $D_5$ over $\mathbb{F}_q$ with multiplier $\alpha$ is realized as linear representation (up to a central scalar) of the unique (up to equivalence) central extension $E_\alpha$.
\begin{pro}
Let $r$ be any positive integer. A
 central extension of $D_5$  by $C_{2^r}$ is given by $$\widetilde{D_5}^r:= \{a, b\,|\, a^5=1,\, b^{2^{r+1}}=1,\, bab^{-1}=a^{-1}\}.$$ 
\end{pro}
\begin{proof}
Since $bab^{-1}=a^{-1}$, we have $b^2ab^{-2}=bbab^{-1}b^{-1}=ba^{-1}b^{-1}=(bab^{-1})^{-1}=a$. Therefore, $b^2$ commutes  with $a$. Hence, $b^2$ belongs to the center of $\widetilde{D_5}^r$. Furthermore, since $b$ has order $2^{r+1}$ and the order of $b^2$ divides $2^r$, it follows that $b^2$ has order $2^r$. Thus, $C_{2^r}=\langle b^2 \rangle$. Finally, we have $\widetilde{D_5}^r/C_{2^r}=\{\bar{a}, \bar{b}\,|\, \bar{a}^5=1,\, \bar{b}^{2}=1,\, \bar{b}\bar{a}\bar{b}^{-1}=\bar{a}^{-1}\}= D_5$.
\end{proof}
The $3$-dimensional projective representation $(\rho, V)$  of $D_5$ can be lifted to a linear representation of $\widetilde{D_5}^r$.

To determine all irreducible representations, let $N$ be the subgroup of $\widetilde{D_5}^r$ generated by $a$ and $b^2$. Since  $b^2$ commutes with $a$ and they have coprime orders, it follows that  $N$ is an abelian subgroup given by  $$N\cong C_5\times C_{2^r}.$$

Furthermore, since the order of $\widetilde{D_5}^r$ is $2^{r+1}\cdot5$, then  $N$ has index two in $\widetilde{D_5}^r$. Hence,  all irreducible linear representations  of $\widetilde{D_5}^r$ have dimension $1$ or $2$ (see \cite[p.61]{SerreLR}).

It follows that we can write $\rho=\chi\oplus \rho_2$, where $\chi$ is a $1$-dimensional representation and $\rho_2$ is an irreducible two-dimensional representation, or we can write $\rho$ as a sum of $1$-dimensional representations. 

We start by analyzing the one-dimensional representation $\chi$. We have $$\chi(a^5)=\chi(a)^5=1 \quad\text{ and }\quad \chi(b)\chi(a)\chi(b)^{-1}=\chi(a)=\chi(a)^{-1}.$$ 
This implies that $\chi(a)=1$ and from the relation $b^{2^{r+1}}=1$, we obtain $\chi(b)^{2^{r+1}}=1.$ Hence $\chi(b)$ is a root of unity in $\mathbb{F}_q^{\times}$ of order dividing $2^{r+1}$. Let $\lambda\in \mathbb{F}_q^{\times}$ be an element of order dividing $2^r$ such that $\chi(b)^2=\lambda$. 
It follows that 
$$\chi(b)=\pm\nu $$
%\quad \text{ or }\quad \chi(b)=\pm\mu,$$ 
where $\pm\nu\in \overline{\mathbb{F}}_q$ are the roots  of the polynomial $x^2-\lambda$.   

For the two-dimensional representation $\rho_2$, since $\mathbb{F}_q$ contains a 
primitive $5$th root of unity $\zeta$, we choose a basis such that $$\rho_2(a)= \begin{pmatrix}
\zeta^k & 0 \\ 0  &\zeta^{-k} \end{pmatrix},$$ where $k\in \{1,2\}$. Suppose $b$ acts by a matrix $B$, we have 
$$B^2=\lambda I_2 \quad \text{and}\quad B\begin{pmatrix}
\zeta^k & 0 \\ 0  &\zeta^{-k} \end{pmatrix} B^{-1}= \begin{pmatrix} \zeta^{-k} & 0 \\ 0  &\zeta^{k} \end{pmatrix}.$$
This implies that $$B=\begin{pmatrix}0 & t\\z & 0
\end{pmatrix}$$ with $zt=\lambda$. 
Since $\nu^2=\lambda$, up to a conjugation (by $P=\begin{pmatrix}1 & 0\\ 0 & s \end{pmatrix}$ with $s=\nu/t$), we can take
$$B=\begin{pmatrix}
    0 & \nu\\ \nu & 0
\end{pmatrix}\quad \text{or} \quad B=\begin{pmatrix}0 & -\nu\\ -\nu & 0 \end{pmatrix}.$$ %\textcolor{blue}{ (I am not sure if this WLOG will work for $3$-dimensional representations).}
   
We conclude that if $\rho=\chi\oplus \rho_2$, then $$\tilde{R}=\rho(a)=\begin{pmatrix}
    1 & 0 & 0\\
    0 & \zeta^k & 0\\
    0 & 0 & \zeta^{-k}
\end{pmatrix}$$ and
$$\tilde{S}=\rho(b)=\begin{pmatrix}
    \pm\nu & 0 & 0\\
    0 &  0 & \nu\\
    0 & \nu & 0
\end{pmatrix} \text{ or }\tilde{S}=\rho(b)=\begin{pmatrix}
    \pm\nu & 0 & 0\\
    0 &  0 & -\nu\\
    0 & -\nu & 0
\end{pmatrix}.$$ 
%since $\nu$ and $\mu$ play symmetric roles.

If $\rho$ is a sum of 1-dimensional representations then $$\tilde{R}=\rho(a)=\begin{pmatrix}
    1 & 0 & 0\\
    0 & 1 & 0\\
    0 & 0 & 1
\end{pmatrix} \quad\text{ and }\quad\tilde{S}=\rho(b)=\begin{pmatrix}
    \pm\nu & 0 & 0\\
    0 &  \pm \nu & 0\\
    0 & 0 & \pm\nu
\end{pmatrix}$$

The case where $\rho$ is a sum of 1-dimensional representations leads to a contradiction since $\tilde{R}$ has order $5$.  We therefore consider the case $$\rho=\chi\oplus \rho_2.$$
Under this decomposition, the action of  $\overline{R}$ gives,
$$Q_1(\overline{R}(x,y,z,u,v))=Q_1,\quad\, Q_2(\overline{R}(x,y,z,u,v))=\zeta^k Q_2, \quad \text{ and }\quad Q_3(\overline{R}(x,y,z,u,v))=\zeta^{-k}Q_3.$$
Similarly, under the action of $\overline{S}$, we have 
$$Q_1(\overline{S}(x,y,z,u,v))=\pm\nu Q_1,\quad\, Q_2(\overline{S}(x,y,z,u,v))=\nu Q_3\quad \text{ and }\quad Q_3(\overline{S}(x,y,z,u,v))=\nu Q_2.$$

It follows that $Q_1$ can exist only if $\nu=\pm1$. Indeed, for $k=1$ the condition imposed by $\overline{R}$ gives that $Q_1=a_1XZ+a_2YU+a_3V^2$. The condition imposed by $\overline{S}$ gives us that $a_1=a_2=a_3=0$ unless  $\nu=\pm1$.   

However, $\nu=\pm1$ implies that $\lambda=1$ and therefore,  $\widetilde{D_5}=D_5$. This tells us that 
%Since $\mathbb{F}_q$ contains a primitive $5$-th root of unity, and its characteristic does not divide the order of $D_5$, it follows that $H^2(D_5,\mathbb F_q^{\times})=0$; see \cite{cheng2023character}. Hence, every projective representation of $D_5$ over $\mathbb F_q$ lifts to a linear representation.
$V$ is a projective representation corresponding to the trivial class in $H^2(D_5,\mathbb F_q^{\times})$. Thus, we can  view it as a linear representation of $D_5$. 
 So, we have 
  
% Using the character table of $D_5$ and the fact that $\tilde{R}$ has order $5$ we obtain the following possible forms of $\tilde{R}$ and $\tilde{S}$.

\begin{itemize}
    \item Case 1. $\tilde{R}=\begin{bmatrix}
    1 & 0  & 0  \\
     0  & \zeta &0\\
    0 & 0 & \zeta^4\\
     
\end{bmatrix}, \quad \tilde{S}=\begin{bmatrix}
    \pm1 & 0  & 0\\
     0  & 0 & \zeta^4 \\
      0 & \zeta &  0\\
     
\end{bmatrix}$, 
\vspace{0.3cm}
\item  Case 2. $\tilde{R}=\begin{bmatrix}
    1 & 0  & 0  \\
     0  & \zeta^2 &0\\
     0 & 0 & \zeta^3\\
     
\end{bmatrix}, \quad \tilde{S}=\begin{bmatrix}
    \pm1 & 0  & 0\\
     0  & 0 & \zeta^3 \\
      0 & \zeta^2 &  0\\
     
\end{bmatrix}.$

\end{itemize}

To obtain the models of the curve we apply the action of $\overline{R}$ and $\overline{S}$ on the variables and see how the quadrics should be transformed looking at the actions of $\tilde{R}$ and $\tilde{S}$ on the quadrics. That is, $$\begin{pmatrix}
    Q_1(\overline{R}(x,y,z,u,v))\\
    Q_2(\overline{R}(x,y,z,u,v))\\
    Q_3(\overline{R}(x,y,z,u,v))
\end{pmatrix}=\tilde{R}\begin{pmatrix}
    Q_1\\ Q_2\\Q_3
\end{pmatrix}\quad \text{and}\quad \begin{pmatrix}
    Q_1(\overline{S}(x,y,z,u,v))\\
    Q_2(\overline{S}(x,y,z,u,v))\\
    Q_3(\overline{S}(x,y,z,u,v))
\end{pmatrix}=\tilde{S}\begin{pmatrix}
    Q_1\\ Q_2\\Q_3
\end{pmatrix}.$$ Therefore, we obtain the following models of the potential curve:

\begin{itemize}
    \item Case 1. $\begin{array}{cc}
         &  \\
         Q_1= a_1XZ+a_2YU+a_3V^2\\
         Q_2= a_4YZ+a_5UV+a_6X^2\\
         Q_3= a_4XU+a_5YV+a_6Z^2\\
         
    \end{array}$, 
          
    \item Case 2. $\begin{array}{cc}
          &\\
        Q_1= a_1XZ+a_2YU+a_3V^2\\
        Q_2= a_4XY+a_5ZV+a_6U^2\\
        Q_3= a_4XV+a_5ZU+a_6Y^2 
    \end{array}$.
\end{itemize} 
%The case where $\tilde{R}$ is the identity matrix cannot occur because it will not produce a curve since the quadrics will all be of the form $a_1XZ+a_2YU+a_3V^2$.

The families still involve too many free parameters to allow for an exhaustive computational search. However, class field theory approach yields a fast computational search, see Appendix B.

%46*x*z + 59*y*u + v^2
%45*x*u + 34*y*v + z^2
%x^2 + y*z + 56*u*v

%58*x*z + 2*y*u + v^2
%55*x*u + y*v + z^2
%x^2 + 53*y*z + 25*u*v

%A computational search over $\mathbb F_{61}$ shows that...

%Concerning the case $q\equiv 0 \bmod 5$, the experiment as mentioned in the introduction showed that except for the case $q=5^7$, for $q\leq 10^6$ the field $\mathbb F_q$ cannot have discriminant $-19$. A general proof could be done by considering the sequence $f_n=\lfloor 2\cdot\sqrt{5^n}\rfloor^2-4\cdot 5^n$ with $n \in \mathbb N$ and showing that for large values of $k$, the subsequence $f_{2k+1}$ is strictly decreasing as the subsequence $f_{2k}=0$.

\section{ The case $q\equiv 0\bmod 5$} \label{sec: q=0[5]}
In this section, we prove that $q=5^7$ is the only value satisfying $q\equiv 0 \bmod 5$  and $d(\mathbb{F}_q)=-19$. To establish this, we rely on the following lemmas.

\begin{lem}[Chap 4, Lemma 5.2 \cite{Cassels_1986}]\label{lem: powseries}
Let $a\in \mathbb{Q}_p$ and suppose that $|a|_p\leq 2^{-2}$ if $p=2$ or $|a|_p\leq p^{-1}$ otherwise. Then there is a power series $\phi_a(x)=\sum_{m=0}^{\infty}\gamma_m x^m$ where, $\gamma_m\in \mathbb{Q}_p$, $\gamma_m\to 0$ such that  $(1+a)^s=\phi_a(s)$ for all $s\in \mathbb{Z}$.
\end{lem}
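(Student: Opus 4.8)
The plan is to realize $\phi_a$ as the $p$-adic binomial series and then reorganize it into a genuine power series in the exponent. The natural candidate is
$$\phi_a(x)=\sum_{n=0}^{\infty}\binom{x}{n}a^n,\qquad \binom{x}{n}=\frac{x(x-1)\cdots(x-n+1)}{n!},$$
since for $s\in\mathbb{Z}_{\geq0}$ the coefficients $\binom{s}{n}$ vanish once $n>s$ and the ordinary finite binomial theorem gives $\sum_n\binom{s}{n}a^n=(1+a)^s$. So I would split the work into three steps: prove the series converges $p$-adically, rewrite it as $\sum_k\gamma_k x^k$ with $\gamma_k\to0$, and finally extend the identity $\phi_a(s)=(1+a)^s$ from $\mathbb{Z}_{\geq0}$ to all of $\mathbb{Z}$.

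The decisive estimate is that $|a^n/n!|_p\to0$, and this is exactly where the hypotheses on $|a|_p$ are used. By Legendre's formula $v_p(n!)=\frac{n-s_p(n)}{p-1}\leq\frac{n-1}{p-1}$, where $s_p(n)$ is the sum of the base-$p$ digits of $n$, so that
$$v_p\!\left(\frac{a^n}{n!}\right)=n\,v_p(a)-v_p(n!)\geq n\Big(v_p(a)-\tfrac{1}{p-1}\Big)+\tfrac{1}{p-1}.$$
For $p$ odd the hypothesis $|a|_p\leq p^{-1}$ means $v_p(a)\geq1>\frac{1}{p-1}$, and for $p=2$ the hypothesis $|a|_2\leq2^{-2}$ means $v_2(a)\geq2>1=\frac{1}{p-1}$; in both cases the coefficient of $n$ is strictly positive, so $v_p(a^n/n!)\to\infty$ and $a^n/n!\to0$. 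Since $\binom{x}{n}\in\mathbb{Z}_p$ for every $x\in\mathbb{Z}_p$ (integer binomial coefficients are integers and $x\mapsto\binom{x}{n}$ is continuous with $\mathbb{Z}$ dense in $\mathbb{Z}_p$), it follows that $|\binom{x}{n}a^n|_p\leq|a|_p^{\,n}\to0$, so $\phi_a$ converges on all of $\mathbb{Z}_p$ and defines a continuous function there.

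To display $\phi_a$ as a power series I would expand the falling factorial $\prod_{i=0}^{n-1}(x-i)=\sum_{k=0}^{n}s(n,k)\,x^k$ using the integer (signed) Stirling numbers of the first kind $s(n,k)$, and set
$$\gamma_k=\sum_{n\geq k}\frac{s(n,k)}{n!}\,a^n.$$
Each $\gamma_k$ converges because $|s(n,k)a^n/n!|_p\leq|a^n/n!|_p\to0$, and the same bound gives $|\gamma_k|_p\leq\max_{n\geq k}|a^n/n!|_p\to0$, hence $\gamma_k\to0$ as required. Because the doubly indexed terms $s(n,k)a^n/n!$ have $p$-adic absolute value tending to $0$ as $n\to\infty$, uniformly in $k$, a $p$-adic Fubini argument legitimizes interchanging the two summations and yields $\phi_a(x)=\sum_k\gamma_k x^k$ for every $x\in\mathbb{Z}_p$. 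The identity $\phi_a(s)=(1+a)^s$ then holds for $s\in\mathbb{Z}_{\geq0}$ by the finite binomial theorem, and I would extend it to all $s\in\mathbb{Z}$ either by continuity, since $\mathbb{Z}_{\geq0}$ is dense in $\mathbb{Z}_p$ and both sides are continuous, or more elementarily from the relation $\phi_a(s)\phi_a(-s)=\phi_a(0)=1$ matching $(1+a)^s(1+a)^{-s}=1$.

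The hard part will be the convergence step: one must control $v_p(n!)$ sharply enough to recognize that the thresholds $|a|_p\leq p^{-1}$ (respectively $|a|_2\leq2^{-2}$) are precisely what forces $a^n/n!\to0$, and then one must justify the rearrangement into powers of $x$ as an honest $p$-adic limit rather than a formal manipulation; once $a^n/n!\to0$ is in hand, the remaining convergence and the extension to negative exponents are routine.
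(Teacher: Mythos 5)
Your proof is correct, and since the paper quotes this lemma directly from Cassels (Chapter 4, Lemma 5.2 of \emph{Local Fields}) without reproving it, the right comparison is with Cassels' argument, which yours essentially reproduces: the binomial series $\sum_n \binom{x}{n}a^n$, the estimate $v_p(n!)\leq \frac{n-1}{p-1}$ from Legendre's formula showing exactly why the thresholds $|a|_p\leq p^{-1}$ (resp.\ $|a|_2\leq 2^{-2}$) force $a^n/n!\to 0$, the Stirling-number rearrangement into $\sum_k \gamma_k x^k$ with $\gamma_k\to 0$, and the extension to negative exponents. The only compressed points --- the continuity of $s\mapsto(1+a)^s$ on $\mathbb{Z}$ in the $p$-adic topology, or equivalently the Vandermonde identity underlying $\phi_a(s)\phi_a(-s)=\phi_a(0)$ --- are routine and correctly flagged as such.
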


\begin{lem}\label{recurence relation}
    Let $u_n$ be defined by $u_0=0,u_1=1$ and 
    $$u_n=u_{n-1}-5u_{n-2}, n\geq2.$$
    Then $u_n= 1$ only for $n=1,2,7$.
\end{lem}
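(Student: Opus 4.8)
The plan is to treat $(u_n)$ as a Lucas sequence and apply Skolem's $p$-adic method, with Lemma \ref{lem: powseries} supplying the required analytic functions. The characteristic polynomial of the recurrence is $x^2-x+5$, whose roots $\gamma=\frac{1+\sqrt{-19}}{2}$ and $\bar\gamma=\frac{1-\sqrt{-19}}{2}$ generate $K=\mathbb{Q}(\sqrt{-19})$ and satisfy $\gamma+\bar\gamma=1$ and $\gamma\bar\gamma=5$. The Binet formula gives $u_n=\frac{\gamma^n-\bar\gamma^n}{\gamma-\bar\gamma}$, so that $u_n=1$ is equivalent to $\gamma^n-\bar\gamma^n=\gamma-\bar\gamma$; this is precisely the arithmetic input behind the main theorem, via the identity $(\gamma^n+\bar\gamma^n)^2+19\,u_n^2=4\cdot 5^n$. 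First I would fix a rational prime $p$ that splits in $K$ and does not divide the norm $5$, so that under a chosen embedding $K\hookrightarrow\mathbb{Q}_p$ both $\gamma$ and $\bar\gamma$ become $p$-adic units.

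The choice of $p$ is the decisive point. Writing $\ell$ for the period of $(u_n\bmod p)$ and splitting $n=j+\ell m$ with $0\le j<\ell$, one wants the three known solutions $1,2,7$ to land in pairwise distinct residue classes modulo $\ell$, each of which is then shown to contain a single solution. The smallest split primes fail: modulo $7$ the period is $6$ and $1\equiv 7$, while modulo $11$ the period is $5$ and $2\equiv 7$, so solutions collide. I would instead take $p=17$, which splits in $K$, is prime to $5$, and has period exactly $16$; here $1,2,7$ fall in distinct classes, and a direct scan of one period shows that $u_j\equiv 1\pmod{17}$ for $j\in\{0,\dots,15\}$ holds \emph{only} for $j\in\{1,2,7\}$.

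Next I would set $A=\gamma^{16}$ and $B=\bar\gamma^{16}$; by Fermat's little theorem $A\equiv B\equiv 1\pmod{17}$, so Lemma \ref{lem: powseries} applies and $A^m,B^m$ extend to convergent power series in $m\in\mathbb{Z}_{17}$. For each residue $j$ this makes
\[
F_j(m)=\gamma^j A^m-\bar\gamma^j B^m-(\gamma-\bar\gamma)=\sum_{k\ge 0}c_k^{(j)}m^k
\]
a $17$-adic analytic function on $\mathbb{Z}_{17}$ whose integer zeros are exactly the solutions $n=j+16m$ of $u_n=1$. Its constant term is $c_0^{(j)}=(\gamma-\bar\gamma)(u_j-1)$. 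For $j\notin\{1,2,7\}$ this is a unit while every higher coefficient lies in $17\mathbb{Z}_{17}$, so $F_j$ has no zero at all; for $j\in\{1,2,7\}$ one has $c_0^{(j)}=0$, the known zero $m=0$, and I would invoke Strassmann's theorem to bound the remaining zeros.

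The technical heart is to show the Strassmann bound equals $1$ in each surviving class, i.e. that $v_{17}(c_1^{(j)})<v_{17}(c_k^{(j)})$ for all $k\ge 2$. Working modulo $17^2$ and using $\log A\equiv A-1$, $\log B\equiv B-1$, one gets $c_1^{(j)}\equiv(\gamma-\bar\gamma)(u_{j+16}-u_j)\pmod{17^2}$, so that $v_{17}(c_1^{(j)})=v_{17}(u_{j+16}-u_j)$ since $\gamma-\bar\gamma=\sqrt{-19}$ is a unit. The only genuine calculation, which I expect to be the main computational step, is to verify that $u_{17}-u_1$, $u_{18}-u_2$ and $u_{23}-u_7$ are each divisible by $17$ but not by $17^2$; this forces $v_{17}(c_1^{(j)})=1$. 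For $k\ge 2$ the factor $1/k!$ contributes valuation $-v_{17}(k!)>-(k-1)$ while $(\log A)^k$ contributes valuation $\ge k$, giving $v_{17}(c_k^{(j)})\ge 2>1$. Hence each $F_j$ with $j\in\{1,2,7\}$ has the single zero $m=0$, corresponding to $n=j$, and together with the zero-free classes this yields $u_n=1$ precisely for $n\in\{1,2,7\}$. The main obstacle is conceptual rather than computational: choosing a prime whose period simultaneously separates the three solutions and produces a sharp Strassmann bound of $1$, which is exactly why $17$ is used in place of the smaller split primes.
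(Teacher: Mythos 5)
Your proof is correct, and it is the same $p$-adic Skolem--Strassmann engine the paper uses (Lemma \ref{lem: powseries} plus Strassmann), but implemented with a genuinely different decomposition. The paper works at $p=7$, splits $n=r+6s$ into six classes, and \emph{accepts} the collision you reject: the class $r=1$ contains both $n=1$ and $n=7$, and the paper handles it by pushing the expansion one coefficient further, verifying $c_{1,1}\equiv 0 \bmod 7^2$ but $c_{2,1}\not\equiv 0\bmod 7^3$ to get a Strassmann bound of \emph{two} zeros in that class, both of which are exhibited ($s=0,1$). So your assertion that ``the smallest split primes fail'' is inaccurate as a statement about the method --- a collision only forces a higher Strassmann bound, not a breakdown --- though it is harmless to your own argument, which is a legitimate design choice: by moving to $p=17$ (which does split, since $-19\equiv 15\equiv 7^2 \bmod 17$, and the period of $u_n \bmod 17$ is indeed $16$, with $u_j\equiv 1$ only for $j=1,2,7$ among $0\le j\le 15$) you separate all three solutions into distinct classes and need only the first-coefficient criterion $v_{17}(c_1^{(j)})=1$ in each. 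I checked your deferred computation and it holds: $u_{17}-u_1=-17\cdot 18150$, $u_{18}-u_2=-17\cdot 41865$, $u_{23}-u_7=-17\cdot 1346190$, and $18150$, $41865$, $1346190$ are each $\equiv 11\bmod 17$, so all three differences have $17$-adic valuation exactly $1$; your identity $c_1^{(j)}\equiv(\gamma-\bar\gamma)(u_{j+16}-u_j)\bmod 17^2$ and the estimate $v_{17}(c_k^{(j)})\ge 2$ for $k\ge 2$ are also sound. The trade-off: the paper's choice of $p=7$ means fewer residue classes and smaller integers but requires controlling second-order coefficients modulo $7^3$ in the colliding class; your choice of $p=17$ keeps every Strassmann bound at $1$ at the cost of a period-$16$ scan and slightly larger integer checks (out to $u_{23}$, modulo $17^2$). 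Both are complete proofs; yours is arguably cleaner in that it never needs a bound exceeding the number of visible zeros plus the structure of $c_2$.
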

\begin{proof}
 The first values of $u_n$ are 
 
\begin{center}
  \begin{tabular}{c|ccccccccccc}
      $n$&0 & 1& 2&3 &4 & 5 & 6& 7&8&9 & 10 \\
      \hline
      $u_n$& 0&1&1 &-4 &-9& 11 & 56& 1&-279&-284 & 1111
 \end{tabular}
    
\end{center}
 The characteristic equation of the recurrence is given by $x^2-x+5=0$. Let $\alpha$ and $\beta$ be roots of $f(x)=x^2-x+5$ in the field of complex numbers  $\mathbb{C}$. Then the general solution to the recurrence relation  is 
$$u_n=\lambda_1\alpha^n+\lambda_2\beta^n,$$
where $\lambda_1=\frac{1}{\alpha-\beta}$ and $\lambda_2=\frac{-1}{\alpha-\beta}$ are determined by the initial conditions.  Substituting $\lambda_1$ and $\lambda_2$ we obtain $$u_n=\frac{\alpha^n-\beta^n}{\alpha-\beta}.$$
 
Instead of considering roots of $f$ in $\mathbb{C}$, we work with the roots $\alpha$ and $\beta$ in $\mathbb{Q}_p$, where $p$ is a prime number for which $f$ splits in $\mathbb{Q}_p[x]$, for instance $p=7$.  In this setting, we will write $u_n-1=g(s)$, where  $s$ depends on $n$ and $g(s)=\sum_{k=0}^{\infty} g_k s^k$ with $g_k\to 0$ as $k\to \infty$. Then, we will use Strassmann's theorem \cite[Chapter 4, Theorem 4.1]{Cassels_1986} to  bound the number of roots of $g$ and subsequently determine the values of $n$ for which $u_n-1=0$.

Since  $f(2)\equiv 0 \bmod 7$, by Hensel's lemma, the root $\alpha$ in $\mathbb{Q}_7$ satisfies
$$ \alpha \equiv 2 \bmod 7,\quad \alpha \equiv 16 \bmod 7^2,\quad  \alpha \equiv 163 \bmod 7^3 .$$

The other root $\beta$ satisfies  $$ \beta\equiv 6 \bmod 7, \quad \beta\equiv 34 \bmod 7^2, \quad  \beta\equiv 181 \bmod 7^3$$

We want to apply the Lemma \ref{lem: powseries} to $\alpha$ and $\beta$, however, they do not satisfy the condition of the lemma. Instead, we will apply the lemma to 

$$ a:= \alpha^6-1\equiv 0\bmod 7\quad \text{ and }\quad  b:= \beta^6-1\equiv 0\bmod 7.$$
One remarks that for any positive integers $r$ and $s$, 

\begin{equation}\label{eq: rec}
  u_{r+6s} = \frac{\alpha^{r+6s} - \beta^{r+6s}}{\alpha - \beta}=\frac{\alpha^r (1+a)^s- \beta^r(1+b)^s}{\alpha - \beta}  \equiv u_r \bmod 7.  
\end{equation}

As we can always write $n=r+6s$ with $0\leq r\leq 5 $, this tells us that, to  find all $n$ such that $u_n= 1$, we need to find all $s$ such that $ u_{r+6s}=1$ for those fixed values of $r$. More precisely, only for $r=1,2$ since for $r=0,3,4,5$ and any $s$, we have that $u_{r+6s}\neq1$ because $u_r\not \equiv 1 \bmod 7$. For the rest of the proof, we restrict to $r\in\{1,2\}$.

From  (\ref{eq: rec}), we  have 
$$ (\alpha-\beta)(u_{r+6s}-1)=\alpha ^r(1+a)^s-\beta^r(1+b)^s - (\alpha -\beta)=g_r(s). $$ 
Furthermore,  $a$ and $b$ satisfy the hypothesis of the Lemma \ref{lem: powseries},  then $g_r(s)$ satisfies the hypothesis of Strassmann's theorem[cite].  We  write $g_r(s)$ as follows:
\begin{align*}
  g_r(s)&=\alpha^r-\beta^r -(\alpha-\beta)+ \alpha^ras-\beta^rbs+ \alpha^ra^2\frac{s(s-1)}{2}- \beta^rb^2\frac{s(s-1)}{2}+\ldots\\
  &= \sum^\infty_{k=0}c_{k,r}s^k.
\end{align*}
It is clear that $c_{0,r}=g_r(0)=(\alpha-\beta)(u_{r}-1)=0$. Moreover, as for any integer $k$, we have  $a^{k}\equiv 0 \bmod 7^{k}$ and $b^{k}\equiv 0 \bmod 7^{k}$, then    $c_{k,r}\equiv \alpha^ra^k-\beta^rb^k\bmod 7^{k+1}$.   In particular, 
 $$ c_{k,r}\equiv 0\bmod 7 \,\text{  for all } k,\quad  c_{1,r}\equiv \alpha^ra-\beta^rb\bmod 7^2 \quad\text{ and }\quad c_{2,r}\equiv \alpha^ra^2-\beta^rb^2\bmod 7^3.$$ 
%Using the following computation,
%\begin{center}
%   \begin{tabular}{|c|c|c|c|c|c|c|}
 %   \hline
 %       & $a$ & $b$  & $a^2$   & $b^2$ & $a^3$ & $b^3$ \\
 %      \hline
  %     $\bmod 7$  &   0 & 0  &  0 & 0 & 0 & 0\\
  %    \hline
  %     $\bmod 7^2$ & 7 & 35 & 0 &  0 & 0  & 0\\
   %   \hline
    %  $\bmod 7^3$ & 154 & 133 & 49  & 49 & 0 & 0 \\
    %   \hline
  % \end{tabular}
%\end{center}
We have  $c_{1,2}\not \equiv 0\bmod 7^2$ and  $c_{k,2}\equiv 0\bmod 7^2$ for all $k\geq 2$. Therefore, by Strassmann's theorem the equation $g_2(s)=0$ has at most one solution in $\mathbb{Z}_7$. Thus $s=0$ is the only solution, so $u_{2+6s}\neq 1$ for all $s>0$.

As $c_{k,1}\equiv 0\bmod 7^2$ for all $k$,  we verify that $c_{2,1} \not \equiv 0\bmod 7^3$. Moreover, for all $k\ge 3$, $c_{k,1}\equiv 0\bmod 7^3$, then the equation $g_1(s)=0$ has at most two solutions in $\mathbb{Z}_7$. Observing that $s=0$ and $s=1$ are solutions, we conclude that these are the only solutions. 

Thus, the pairs $(r,s)$ for which $g_r(s)=0$ are  $(1,0), (1,1),$ and $(2,0)$. They correspond to $n= 1, 7$ and $2$, respectively.
\end{proof}

\begin{rem}\label{un=-1}
As $u_r \not\equiv -1 \bmod 7$ for all $r\in \{0,1,2,3,4,5\}$, it follows that $u_{r+6s}\neq -1$ for all $r,s$, hence $u_n\neq-1$ for all $n$.
\end{rem}
\begin{pro}\label{Diophantine}
    The only value of $q$ satisfying $q\equiv 0 \bmod 5$  and $d(\mathbb{F}_q)=-19$ is $q=5^7$. 
\end{pro}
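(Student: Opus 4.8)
The plan is to translate the arithmetic condition into a Diophantine equation, recognize it as a norm equation in the ring of integers of $K=\mathbb{Q}(\sqrt{-19})$, and then exploit that $\mathcal{O}_K$ has class number one to reduce the problem \emph{exactly} to the question answered by Lemma \ref{recurence relation}.

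First I would set up the reduction. Since $q\equiv 0\bmod 5$ forces $q=5^r$, put $x=\lfloor 2\sqrt{5^r}\rfloor$; then $d(\mathbb{F}_q)=-19$ is equivalent to $x^2+19=4\cdot 5^r$. Conversely, an integer solution $x$ of $x^2+19=4\cdot 5^r$ gives $d(\mathbb{F}_q)=-19$ precisely when $x=\lfloor 2\sqrt{5^r}\rfloor$, and from $x^2=4q-19$ the defining inequality $x^2\le 4q<(x+1)^2$ of the floor holds if and only if $x\ge 10$. So the task is to solve $x^2+19=4\cdot 5^r$ and retain only the solutions with $x\ge 10$.

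Next I would pass to the maximal order. As $4\cdot 5^r-19\equiv 1\bmod 4$, any solution $x$ is odd, so $a:=(x-1)/2\in\mathbb{Z}$ and the equation becomes $a^2+a+5=5^r$. With $\gamma=\tfrac{1+\sqrt{-19}}{2}$ and $\mathcal{O}_K=\mathbb{Z}[\gamma]$, the left-hand side is exactly $N_{K/\mathbb{Q}}(a+\gamma)$, since $a+\gamma=\tfrac{x+\sqrt{-19}}{2}$. Hence I am looking for elements of $\mathcal{O}_K$ of norm $5^r$ whose $\gamma$-coordinate in the basis $\{1,\gamma\}$ equals $1$. Now $\mathcal{O}_K$ has class number one and unit group $\{\pm 1\}$, and $\gamma\overline{\gamma}=5$ shows $(5)=(\gamma)(\overline{\gamma})$ splits into two distinct primes. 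Therefore every element of norm $5^r$ is $\pm\gamma^i\overline{\gamma}^{\,j}$ with $i+j=r$, which using $\gamma\overline{\gamma}=5$ simplifies to $\pm 5^m\gamma^k$ or its conjugate with $2m+k=r$. Writing $\gamma^k=A_k+u_k\gamma$, one checks that $(u_k)$ satisfies $u_0=0$, $u_1=1$, $u_k=u_{k-1}-5u_{k-2}$, i.e. it is the sequence of Lemma \ref{recurence relation}, and that the $\gamma$-coordinate of such an element equals $\pm 5^m u_k$. Matching this to $1$ forces $m=0$, $k=r$, and $u_r=\pm1$.

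Finally, Lemma \ref{recurence relation} together with Remark \ref{un=-1} gives $u_r=\pm1$ if and only if $r\in\{1,2,7\}$. Discarding the solutions with $x<10$ eliminates $r=1$ (where $x=1$ and $d(\mathbb{F}_5)=-4$) and $r=2$ (where $x=9$ and $d(\mathbb{F}_{25})=0$), leaving only $r=7$, for which $x=559=\lfloor 2\sqrt{5^7}\rfloor$ and $559^2-4\cdot 5^7=-19$; thus $q=5^7$ is the unique value. The genuinely hard input is the finiteness of solutions of $x^2+19=4\cdot 5^r$, but this is already isolated in Lemma \ref{recurence relation} and established $p$-adically via Strassmann's theorem; within the present proof the only delicate point is the bookkeeping in the factorization step—correctly tracking the units, complex conjugation, and the powers of $5$—so that the norm equation collapses to $u_r=\pm1$ without gaining or losing solutions.
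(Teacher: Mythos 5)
Your proposal is correct and takes essentially the same route as the paper's proof: reduce to the Diophantine equation $x^2+19=4\cdot 5^r$, pass to the class-number-one ring $\mathbb{Z}[\gamma]$ with $\gamma=\frac{1+\sqrt{-19}}{2}$, collapse the resulting norm/factorization condition to $u_r=\pm 1$, and invoke Lemma \ref{recurence relation} together with Remark \ref{un=-1} before discarding the solutions $r=1,2$ that fail $x=\lfloor 2\sqrt{5^r}\rfloor$. If anything, your bookkeeping is slightly more careful than the paper's: your coordinate-matching step explicitly rules out the mixed factorizations $\pm 5^m\gamma^k$ with $m>0$, which the paper's direct assertion $y-\alpha=\epsilon\beta^n$ leaves implicit, and your criterion $x\geq 10$ makes the final floor-function check transparent.
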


\begin{proof}
Recall that the discriminant of the finite field $\mathbb{F}_q$ is given by 
$d(\mathbb{F}_q) = \lfloor 2\sqrt{q} \rfloor^2 - 4q$. Then for 
 $q\equiv 0 \bmod 5$  and $d(\mathbb{F}_q)=-19$, we obtain  the Diophantine equation 
    $$x^2+19=4\cdot5^n,$$
where $x$ and $n$ are unknown. From the equation, we deduce  that $x$ must be odd. That is,  $x=2y-1$ for some integer $y\in \mathbb Z$. Then we have 
$$4\cdot 5^n= x^2+19=(2y-1)^2+19=4(y^2-y+5).$$
 It follows that 
$$y^2-y+5=(y-\alpha)(y-\beta)=5^n,$$
where $\alpha$ and $\beta$ are the roots in $\mathbb{C}$ of $y^2-y+5=0$.
Furthermore, since  $\alpha\beta=5$, we obtain  $$(y-\alpha)(y-\beta)=\alpha^n \beta^n.$$
The above factorization can be viewed in the ring 
 $\mathbb Z[\alpha]$ which is the ring of integers of $\mathbb Q(\sqrt{-19})$ and it happens to be a unique factorization domain.   Note that the ideals $(\alpha)$,  $(\beta)$ are coprime.  Hence,
 
 $$y-\alpha=\epsilon \beta^n, \quad  y-\beta=\epsilon \alpha^n, \text{ where } \epsilon \in \{-1,1\}.$$
Thus,  $$\alpha - \beta = \epsilon (\alpha^n-\beta^n). $$ 
From Lemma \ref{recurence relation} and Remark \ref{un=-1}, we know $u_n=\frac{\alpha^n-\beta^n}{\alpha-\beta}=\pm 1$ only for $n=1,2,7$.  Therefore, the only  solutions $(x,n)$ to the diophantine equation are  $(1,1)$, $(9,2)$, and $(559,7)$.  Furthermore, the only solution with $x = \lfloor 2\sqrt{5^n} \rfloor$ is  $(559,7)$.
\end{proof}

 In the Appendix B, Schoof proved that for $q=5^7$ the curve does not exists.

\section*{Acknowledgments}
The authors thank the Centre International de Mathématiques Pures et Appliquées (CIMPA) for their support through the organization of the CIMPA school in Turkey in 2022, where this project started. We also thank the Centre International de Rencontres Mathématiques (CIRM) for providing an excellent research enviroment during our stay. We thank  Christophe Ritzenthaler for  proposing this project and for his insightful discussions, Gabor Wiese for reading the manuscript and providing valuable feedback, and René Schoof for his insight on the case $q\equiv0\bmod 5$. The second-named author is supported by the Luxembourg National Research Fund AFR-PhD 16981197. 

\section*{Appendix A}\label{appendix}
Here, $H_1$ is a genus of an unimodular irreducible hermitian module of dimension $5$ over an imaginary quadratic extension  $K$  of $\mathbb{Q}$ with discriminant $d(K) = -19$. Perhaps see \cite{hermitian}. In addition, we give the size of its automorphism group. The automorphism group of the  hermitian module is given by the matrices $U\in \operatorname{GL_5(\mathcal{O}_K)}$ such that $\overline{U}^TH_1U=H_1$.  The matrices $R$ and $S$ below represent the generators of the subgroup  $D_5$ in the automorphism group. One can obtain $R$ and $S$  with MAGMA from \cite{HermitianLattices10} with the following commands: 
\begin{verbatim}
g:=5;                         
dis:=-19; 
LList:=OrderToLattices(dis,g);
LList[10];
\end{verbatim}

Note that in \cite{hermitian}, two other genera, $H_2$ and $H_3$, are described, each with an automorphism group of the same size. However, they will give conjugates of $R$ and $S$.

$$
H_1=\begin{bmatrix}
   3 &  &  &  &  \\
   \gamma & 3 &  &  &  \\
   0 & -1 & 3 &  &  \\
  -1 & 0 & 1 & 4 &  \\
  1-\gamma & -\gamma & -1+\gamma & 1+\gamma & 5
\end{bmatrix},
\quad |\operatorname{Aut}| = 2^2\times 5,$$

\[
R=\begin{array}{cc}
\begin{bmatrix}
-\gamma - 1 & 3\gamma - 7 & -2\gamma & 4 & 0 \\
\gamma + 1 & 2\gamma + 1 & -2 & 0 & 4 \\
\gamma - 2 & 4\gamma - 8 & -2\gamma - 1 & 2 & 2\gamma + 2 \\
-\gamma + 4 & -\gamma - 6 & -\gamma + 4 & 2\gamma + 1 & -\gamma + 2 \\
0 & 6 & \gamma - 2 & -\gamma & -\gamma
\end{bmatrix},
&
S=\begin{bmatrix}
-1 & -\gamma - 1 & 2 & 0 & 0 \\
0 & 1 & 0 & 0 & 0 \\
0 & 0 & 1 & 0 & 0 \\
0 & -\gamma + 3 & \gamma & 0 & 1 \\
0 & \gamma - 3 & -\gamma & 1 & 0
\end{bmatrix}
\end{array}.
\]

%\newpage

%\section*{Appendix B: Appendix to ``Maximal curves of genus 5 over finite fields"}\label{appendix}
 %\documentclass[11pt]{article}

%\usepackage{amsmath, amssymb, amsthm}
%\usepackage{mathrsfs}
%\usepackage[margin=1in]{geometry}

% Standard notation
\newcommand{\QQ}{\mathbf{Q}}
\newcommand{\ZZ}{\mathbf{Z}}
\newcommand{\FF}{\mathbf{F}}
\newcommand{\HH}{\mathbf{H}}
\newcommand{\RR}{\mathbf{R}}
\newcommand{\CC}{\mathbf{C}}
\newcommand{\PP}{\mathbf{P}}

\renewcommand{\AA}{\mathbf{A}}

\newcommand{\abs}[1]{\lvert\!\lvert #1 \rvert\!\rvert}
\newcommand{\zmod}[1]{\,\,(\mathrm{mod}\,\,#1)}
\newcommand{\Gal}{\mathrm{Gal}}
\newcommand{\pp}{\mathfrak{p}}
\newcommand{\qq}{\mathfrak{q}}
\newcommand{\ga}{J_{\pi}}
\newcommand{\gb}{J_{\pi'}}
\newcommand{\onto}{\twoheadrightarrow}

% References used in the text
\newcommand{\Sref}{1}
\newcommand{\Zref}{2}

% Theorem environments
\newtheorem{proposition}{Proposition}
\renewcommand{\theproposition}{B.\arabic{proposition}}

\newtheorem{corollary}[proposition]{Corollary}

%\title{Appendix to ``Maximal curves of genus $5$ over finite fields''}
%\author{
%Ren\'e Schoof\\[0.5em]
%\small Dipartimento di Matematica\\
%\small $2^{\mathrm{a}}$ Universit\`a di Roma ``Tor Vergata''\\
%\small I-00133 Roma, Italy\\
%\small \texttt{schoof@mat.uniroma2.it}
%}
%\date{}

%\begin{document}

%\maketitle

\section*{Appendix B to ``Maximal curves of genus $5$ over finite fields"} \label{appendix B}

Let $X$ be a smooth, projective, absolutely irreducible curve over a finite field
$\FF_q$ of genus $g_X=5$. Suppose that $X$ admits an automorphism of order $5$
and let $G$ denote the group generated by it. Let $E$ denote the quotient of $X$
by $G$. The conductor-discriminant formula applied to the cyclic Galois cover
$X \rightarrow E$ is
\[2g_X - 2 = 5(2g_E - 2)+\sum_{\chi} \deg(\mathrm{conductor}(\chi)).
\]
It easily implies that the genus $g_E$ of $E$ is $1$ and that the degree of the
common conductor $D$ of the non-trivial characters $\chi$ of $G$ is equal to $2$.

By class field theory the reciprocity homomorphism
\[
\theta : \AA_E^*/K^*U_D \longrightarrow G
\]
is surjective. Here $K$ is the function field $\FF_q(E)$ and $\AA_E$ is the
ad\`ele ring of $E$. The subgroup of the id\`ele group $\AA_E^*$ of unit
id\`eles is denoted by $U$. The quotient group $\AA_E^*/U$ is naturally
isomorphic to the divisor group $\mathrm{Div}(E)$. Let $U_D$ denote the subgroup
of $u \in U$ for which $u \equiv 1 \zmod D$. There is an exact sequence
\begin{equation*}
 \begin{matrix}
0 & \longrightarrow & U/\FF_q^*U_D
& \longrightarrow & \AA_E^*/K^*U_D
& \longrightarrow & \AA_E^*/K^*U
& \longrightarrow & 0 \\[0.3em]
&&&&&& \Vert && \\[0.3em]
&&&&&& \mathrm{Pic}(E) &&
\end{matrix}   \tag{$\star$}
\end{equation*}

Since the degree of $D$ is $2$, there are three possibilities for $D$. It is
either a point of degree $2$, or it is the sum of two degree $1$ points, which
may or may not be equal. The group $U/\FF_q^*U_D$ is isomorphic to
$\FF_{q^2}^*/\FF_q^*$, $\FF_q^*$ or $\FF_q$ in these cases. The orders of these
groups are $q+1$, $q-1$ and $q$, respectively.

\begin{proposition}
We have $q \not\equiv \pm 2 \zmod 5$.
\end{proposition}
\begin{proof}
If $q$ were congruent to $\pm 2 \zmod 5$, none of the possible orders of
$U/\FF_q^*U_D$ is divisible by $5$. Therefore $U/\FF_q^*U_D$ is in the kernel of
$\theta$. It follows that $\theta$ factors through
$\AA_E^*/K^*U = \mathrm{Pic}(E)$. But that implies that the cover
$X \rightarrow E$ is unramified, which it is not. This contradiction proves the
proposition.
\end{proof}

By Zaytsev~\cite[Thm.~4.13]{zaytsevoptimal}, an optimal genus $5$ curve over $\FF_q$
for which one has $[2\sqrt{q}]^2 - 4q = -19$, admits an automorphism of order $5$. Therefore Proposition~B.1 applies. It gives an alternative proof of Theorem \ref{Theorem 3.2} of this paper.

\vspace{0.5cm}

The next proposition regards $q=5$. Over $\FF_5$ there is a unique elliptic
curve $E'$ with $\#E'(\FF_5)=5$. The trace of its Frobenius endomorphism is $1$
and the discriminant of its endomorphism ring is $-19$. The curve $E'$ is given
by the Weierstrass equation $y^2 = x^3 - 2x + 2$.

\begin{proposition}
If $q=5$, then the curve $E$ cannot be isomorphic to $E'$.
\end{proposition}

\begin{proof}
If $q=5$, the degree $2$ divisor $D$ can only be $2P$ for some
$\FF_q$-rational point $P$ of $E$. In this case the order of
$U/\FF_q^*U_D$ is equal to $q=5$. The exact sequence $(\star)$ modulo fifth powers
leads to the exact sequence
\[
U/\FF_5^*U_D
\longrightarrow
\AA_E^*/{\AA_E^*}^5K^*U_D
\longrightarrow
\AA_E^*/{\AA_E^*}^5K^*U
\longrightarrow 0.
\]
If $E$ were isomorphic to the elliptic curve $E'$ given by
$y^2=x^3-2x+2$, the leftmost homomorphism is zero. In other words, $U$ is
contained in the subgroup ${\AA_E^*}^5K^*U_D$ of $\AA_E^*$. To see this, we first observe that the group $E(\FF_5)$ acts transitively on
itself by translations. Therefore we may take for $P$ any of the five points in
$E(\FF_5)$. We choose $P=(1,1)$. Then we put $Q=(2,-1)$ and let
$\xi \in \AA_E^*$ denote an id\`ele whose image in $\mathrm{Div}(E)$ is the
divisor $Q-O$, where $O$ is the point of $E$ at infinity. It is not difficult to see that the divisor of the function
\[
h =
\frac{(y+1)^2(x-2)}{y-2x-2}
\]
is equal to $5(Q-O)$. Therefore both $\xi^5$ and $h$ are elements of
$\AA_E^*$ whose images in $\mathrm{Div}(E)$ are equal to $5(Q-O)$. It follows
that $v = \frac{\xi^5}{h}$ is in $U$. Since $h(P)=2$ and
\[
h-2 =
\frac{(x-1)(2y+x^3-x^2+2x)}{y-2x-2}
\]
has a simple zero in $P$, the id\`ele unit $v$ generates $U/U_D$. Therefore
every $u \in U$ can be written as $v^k w$ for some $k \in \ZZ$ and
$w \in U_D$. Since $v$ is in ${\AA_E^*}^5K^*$, this proves that we indeed have $U \subset {\AA_E^*}^5K^*U_D$.

It follows that the natural map
\[
\AA_E^*/{\AA_E^*}^5K^*U_D
\mathop{\longrightarrow}^{\cong}
\AA_E^*/{\AA_E^*}^5K^*U
\]
is an isomorphism. By class field theory, the group on the left is isomorphic
to the Galois group of the maximal abelian exponent $5$ extension $L$ of $K$ of
conductor at most $D=2P$. Note that the function field $\FF_5(X)$ of the curve
$X$ is a subfield of $L$. Since $\AA_E^*/{\AA_E^*}^5K^*U_D$ is isomorphic to  $\AA_E^*/{\AA_E^*}^5K^*U$,
 the extension $K \subset L$ is unramified at \textit{all} places. Since we have $K \subset \FF_5(X) \subset L$,
 the same is true for $\FF_5(X)$. However, the cover $X \rightarrow E'$ is
actually ramified and hence we obtain a contradiction.
This proves the proposition.
\end{proof}

\begin{corollary}
When $q$ is a power of $5$, there is no optimal genus $5$ curve $X$ over
$\FF_q$ for which $[2\sqrt{q}]^2 - 4q = -19.$
\end{corollary}

\begin{proof}
Suppose $X$ is such an optimal curve. By Proposition \ref{Diophantine} of this paper, the
curve $X$ cannot exist, except possibly when $q=5^7$. In this case we have $[2\sqrt{q}] = 559$ and the eigenvalues of the characteristic polynomial of Frobenius are $\frac{-559+\sqrt{-19}}{2} = \phi^7$ and its complex conjugate, where $\phi = \frac{1+\sqrt{-19}}{2}$. Since the ring $\ZZ[\phi]$ is generated by $\phi^7$, a theorem of Serre~\cite[Thm.~9 of the appendix]{Lauter} implies that $X$ is the base change
of a genus $5$ curve $X'$ over $\FF_5$.

By Zaytsev~\cite[Thm.~4.13]{zaytsevoptimal}, the automorphism group of $X$ over
$\FF_{5^7}$ is isomorphic to the dihedral group $D_5$. The Galois group of
$\FF_{5^7}$ over $\FF_5$ acts on $\mathrm{Aut}_{\FF_{5^7}}(X')$. Since $7$ is
prime to $\#\mathrm{Aut}(D_5)=20$, this action is trivial. Therefore all
automorphisms of $X'$ are defined over $\FF_5$. The quotient of $X'$ by an
automorphism of order $5$ is isomorphic to the unique elliptic curve $E'$ over
$\FF_5$ with five rational points. Therefore Proposition B.2 applies and we
conclude that this is not possible. Therefore $X$ cannot exist.
\end{proof}

In view of the results in this paper, the fields $\FF_q$ over which an optimal
genus $5$ curve $X$ with $[2\sqrt{q}]^2 - 4q = -19$ may exist necessarily satisfy $q \equiv 1 \zmod 5$. As we explained above, in these cases $X$ is a cyclic degree $5$ cover ramified
over two distinct $\FF_q$-rational points of a specific genus $1$ curve $E$. By
translating we may assume that the two points are the point $O$ at infinity and
a second point $P$. By Kummer theory the function field of $X$ is of the form $\FF_q(E)\left(\sqrt[5]{h}\right)$, where $h \in \FF_q(E)$ is a function whose divisor is $m(P-O)$ with
$m=\#E(\FF_q)$. The function $h$ is unique up to a scalar multiple. The number
of $\FF_q$-rational points on $X$ is equal to $2+5r$, where $r$ is the number
of points $Q \in E(\FF_q) - \{P,O\}$ for which $h(Q)$ is a fifth power in $\FF_q^*$.

A short Pari-GP program, computing the number $r$ for each point $P \neq O$ in
$E(\FF_q)$ and each function $h$, showed that for $q<10000$ there is no optimal
genus $5$ curve $X$ over $\FF_q$ with $[2\sqrt{q}]^2 - 4q = -19$. This is actually a short list of ten primes, the smallest and largest being
$q=61$ and $q=9511$, respectively. The entire computation took less than two
hours.

\vspace{1em}

\noindent
Ren\'e Schoof\\[0.5em]
Dipartimento di Matematica\\
$2^{\mathrm{a}}$ Universit\`a di Roma ``Tor Vergata''\\
I-00133 Roma, Italy\\
Email: \texttt{schoof@mat.uniroma2.it}

%\end{document}

\let\cleardoublepage\clearpage

\bibliographystyle{plainurl}
\bibliography{Genus_5}

\end{document}